\definecolor{darkgreen}{rgb}{0.0, 0.7, 0.0}
\definecolor{purple}{rgb}{0.5, 0.0, 0.5}
\newcommand{\sayVS}[1]{\say[VS]{\color{blue}{\bf VS:}\;#1}}
\newcommand{\sayLM}[1]{\say[LM]{\color{darkgreen}{\bf LM:}\;#1}}
\renewcommand{\AA}{\mathbb{A}}
\newcommand{\CC}{\mathbb{C}}
\newcommand{\DD}{\mathbb{D}}
\newcommand{\FF}{\mathbb{F}}
\newcommand{\HH}{\mathbb{H}}
\newcommand{\PP}{\mathbb{P}}
\newcommand{\QQ}{\mathbb{Q}}
\newcommand{\RR}{\mathbb{R}}
\newcommand{\ZZ}{\mathbb{Z}}
\newcommand{\cC}{\mathcal{C}}
\newcommand{\fF}{\mathcal{F}}
\newcommand{\hH}{\mathcal{H}}
\newcommand{\lL}{\mathcal{L}}
\newcommand{\oO}{\mathcal{O}}
\theoremstyle{plain}
\newtheorem{lemma}{Lemma}[section]
\newtheorem{proposition}[lemma]{Proposition}
\newtheorem{corollary}[lemma]{Corollary}
\newtheorem{theorem}[lemma]{Theorem}
\theoremstyle{definition}
\newtheorem{definition}[lemma]{Definition}
\theoremstyle{remark}
\newtheorem{example}[lemma]{Example}
\newtheorem{remark}{Remark}
\newcommand{\Z}{\mathbb{Z}}
\newcommand{\Q}{\mathbb{Q}}
\newcommand{\ov}{\overline}
\def \PP{\mathbb{P}}
\def\M0{\mathcal M^0}
\def \M{\mathcal M}
\DeclareMathOperator{\codim}{{codim}}
\newcommand{\cF}{{\mathcal F}}
\newcommand{\cG}{{\mathcal G}}
\newcommand{\cS}{{\mathcal S}}
\newcommand{\hra}{\hookrightarrow}
\title{Higher discriminants and the topology of algebraic maps}
\author{Luca Migliorini}
\author{Vivek Shende}
\begin{document}

\begin{abstract}
We show that the way in which Betti cohomology varies in a proper family of
complex algebraic varieties is controlled by certain `higher discriminants' in the base.  
These discriminants are defined in terms of transversality conditions, which in the case 
of a morphism between smooth varieties can be checked by a tangent space calculation.  
They control the variation of cohomology in the following two senses: 
(1) the support of any summand of
the pushforward of the $\mathrm{IC}$ sheaf along a projective map is a component of a higher discriminant,
and (2) any component of the characteristic cycle of the proper pushforward of the constant function is 
a conormal variety to a component of a higher discriminant.  

The same would hold for the Whitney stratification of the family, but there are vastly fewer
higher discriminants than Whitney strata.  For example, in the case of the Hitchin fibration, 
 the stratification by higher discriminants gives exactly the $\delta$ stratification introduced
by Ng\^o. 
\end{abstract}

\maketitle

\thispagestyle{empty}

\section{Introduction}

Let $f:Y  \to X$ be a proper family of algebraic varieties.    
Suppose as given a satisfactory understanding of the cohomology of the 
general fibre, and perhaps 
of some particularly well behaved special fibres.  What can be said about 
the cohomology of an arbitrary fibre?


Where the map $f$ is smooth, all fibres have the same cohomology.
%
Over $\CC$, the theory of Whitney stratifications assures us of the existence of some stratification of $X$
so that topological properties of the fiber are constant in each stratum.  Any question
about the cohomology of fibres could be answered by checking at the generic point of every
Whitney stratum.  Unfortunately there tends to be a vast number of strata -- at least as many as 
topological types of fibres -- which are moreover hard to characterize even in small examples.  
Checking any fact on each stratum is prohibitive.  
 
Our purpose here is to explain a better strategy.  First we review two points of view on
how to think of cohomology in families. For simplicity in this introduction we focus on {\em complex} varieties. 
\vspace{2mm}

\subsection{Microlocal geometry}\label{mg}
MacPherson's Chern class morphism from constructible functions to homology,  
$$\mathbf{c} : Con(X) \to H_*(X)$$
commutes with proper pushforwards, and is normalized by the condition
that for smooth, proper $X$ we have $\mathbf{c}({\mathbf 1}_X) = c(TX) \cap [X]$. 
Any answer to our question almost necessarily involves coming to terms with it: 
we want to understand the family of cohomologies
of fibres $Rf_* \QQ$; a simpler question is to understand the family of Euler characteristics of fibres $f_* {\mathbf 1}_X$,
and a still simpler question is to understand 
$\mathbf{c}(f_* {\mathbf 1}_X) = f_* \mathbf{c}({\mathbf 1}_X)$. 


We recall the microlocal perspective on these issues \cite{KS, Ken}.  To a constructible
function on a smooth variety $Z$, one can assign the conical Lagrangian of 
co-directions in $T^*Z$ along which it fails to remain constant; this is called the {\em singular support}.
Each component
can be weighted by the generic amount by which the function changes; 
the result is a conical Lagrangian cycle in 
$T^*Z$ called the {\em characteristic cycle}. 
This gives an isomorphism between the free abelian groups of constructible functions and conical
Lagrangian cycles.  The space of Lagrangian cycles has a geometrically natural basis: 
closures of conormals to smooth, locally closed subvarieties.  So we expand

\begin{equation} \label{eq:cc} CC(f_* {\mathbf 1}_Y) = \sum_\alpha n_\alpha [\overline{T_{V_\alpha}^* X}] \end{equation}

To return to constructible functions, we invoke the 
 Brylinski-Dubson-Kashiwara local index formula \cite{BDK, G}.  
\begin{eqnarray*}
CC^{-1}: Lag(T^* Z) & \xrightarrow{\sim} & Con(Z) \\
\big[ \overline{T_S^* Z} \big] & \mapsto & (-1)^{\codim S} Eu_S 
\end{eqnarray*}

The ``Euler obstruction'' $Eu_S$ is a constructible function intrinsic to a variety $S$
and taking value $1$ at smooth points, originally defined by 
MacPherson in terms of the Nash transform.  By the above index formula, we
have 
$$f_* 1_Y = \sum_\alpha (-1)^{\codim V_\alpha} n_\alpha \cdot Eu_{V_\alpha}$$

There is a pushforward of conical
Lagrangian cycles, commuting with the characteristic cycle transformation \cite{M, Ken, KS}.  
From $f: Y \to X$, one forms its $\tilde{f}: T^* X \times_X Y \to T^* X$ and its differential
$df: T^* X \times_X Y \to T^* Y$.  These give a correspondence
$$T^*Y \xleftarrow{df} T^* X \times_X Y  \xrightarrow{\tilde{f}} T^*X$$
This correspondence gives the pushforward of Lagrangian cycles, and in particular: 
$$SS(f_* {\mathbf 1}) \subseteq \tilde{f}(  df^{-1} (0_Y))$$ 

Using this formula in practice means figuring out what the components $V_\alpha$ of 
$ \tilde{f}(  df^{-1} (0_Y))$ are.  The purpose of the present work is to give a characterization
of these components in terms of more readily computable quantities.

\subsection{The decomposition theorem}\label{dt}

The second point of view comes from the decomposition theorem of Beilinson, Bernstein, and Deligne \cite{BBD}. 
A special case of this asserts that if $Y$ is smooth and $f:Y \to X$ is projective, then 
\begin{equation} \label{eq:decomposition} Rf_* \QQ = \bigoplus_\alpha IC(U_\alpha, \mathcal{L}_\alpha)[n_\alpha] \end{equation}
In words, $Rf_* \QQ$ splits as a direct sum of shifted simple perverse sheaves. 
One such simple perverse sheaf, generically on its support $U_\alpha$, is 
just a simple locally constant sheaf $\mathcal{L}_\alpha$; the perverse sheaf can be 
uniquely recovered from $\mathcal{L}_\alpha$ by a complicated procedure called intermediate extension.  
This separates the problem into two steps: first, determine the $U_\alpha$ and $\mathcal{L}_\alpha$; 
second, understand the intermediate extension.  

The decomposition theorem itself leaves the identity of the $U_\alpha$ as a mystery, but 
some results constraining them are known. 
If $Y$ is nonsingular and the map $f$ is semismall, i.e. 
$\codim \{ \dim Y_x = i \} \ge 2i$, then the $U_\alpha$ which appear are precisely the closures of the components of the 
loci $\{ \dim Y_x = i \}$ for which the above inequality is an equality \cite{BM,dCM}.  For any $f$,  
if the maximum fibre dimension is $d$, then the $U_\alpha$ which appear can be shown by the relative
Hard Lefschetz theorem to have codimension $\le d$, with strict inequality over the locus where 
the fibres are irreducible (``Goresky-MacPherson inequality", see \cite{N}, Th\'eor\`eme 7.3.1).  
Finally, there is the recent and celebrated result of Ng\^o \cite{N, N1}, which applies to
the case when $f:Y\to X$ is a certain sort of relative compactification of a sufficiently well behaved abelian scheme,
and implies for instance that all the $U_\alpha$ which occur are contained in the locus where the fibres $Y_x$ 
are not integral.

\subsection{Higher discriminants}

We now rephrase our goal: to understand $f_* {\mathbf 1}_Y$ and $Rf_* \QQ_Y$, we want to identify the varieties 
$V_\alpha$ which appear in Equation \ref{eq:cc} and the varieties $U_\alpha$ which appear in 
Equation \ref{eq:decomposition}.  To this end, we introduce the {\em higher discriminants}.  When $X$ is smooth 
 (a more general formulation appears in Sec. \ref{sec:sing}), these are given by: 

$$\Delta^i(f):= \{ x \in X \, |\, \mbox{no
$(i-1)$-dimensional subspace of $T_x X$ is transverse to $f$}\,\}$$

This determines a stratification

$$X = \Delta^0(f) \supset \Delta^1(f) \supset \Delta^2(f) \supset \Delta^3(f) \supset \cdots$$

Observe that $\Delta^1(f)$ is by definition the locus where the fibre is singular -- that is, the 
usual discriminant.  By generic smoothness, \begin{equation}\label{codest}
\codim \Delta^i(f) \ge i.
\end{equation}

We think about these discriminants in the following way.  
Moving $\delta \in \Delta(f)$ off the discriminant to  
$\not  \!\! \delta \notin \Delta^1(f)$ 
changes the fibre topology: $Y_\delta \not \sim Y_{\not\, \delta}$.  
{\em But we can blur our focal point to obscure this feature}: we pass
to a one dimensional disc  $\DD \ni \delta$, chosen generic and small enough to retract $f^{-1}(\DD)=:Y_\DD \sim Y_{\delta}$. 
A one dimensional disc cannot be perturbed off the discriminant, and indeed for $\delta$ general 
in $\Delta^1(f)$, a perturbation $\DD'$ of the thickening $\DD$ induces a homeomorphism $Y_{\DD'} \sim Y_{\DD}$.  
The higher discriminant $\Delta^2(f)$ is the locus which still appears to our blurred vision: 
where even a general perturbation of a general one parameter thickening changes the fibre topology.

Our main result is that all the $V_\alpha$ and $U_\alpha$ of \S \ref{mg} and \ref{dt} are irreducible components of higher discriminants. 

\vspace{3mm} {\bf Theorem A.} 
If $f:Y \to X$ is a proper map of algebraic varieties, then
any component of the characteristic cycle of $f_*{\mathbf 1}_Y$ is the conormal variety to an $i$-codimensional
component of $\Delta^i(f)$ for some $i$.
\vspace{3mm}

\vspace{3mm} {\bf Theorem B.}
If $f: Y \to X$ is a projective map of algebraic varieties,  then
any component of the support of a summand of $Rf_* IC_Y$ is an $i$-codimensional component
of $\Delta^i(f)$ for some $i$.
\vspace{3mm}

We will deduce Theorem A from:

\vspace{3mm} {\bf Theorem C.}
If $f: Y \to X$ is a proper map of smooth algebraic varieties, $\tilde{f}(  df^{-1} (0_Y))$ is the union 
over all $i$ of
the conormals to all the $i$-codimensional components of $\Delta^i(f)$. 
\vspace{3mm}

In fact, in characteristic zero, this implies Theorem B as well: the conormal to the 
support of any summand of a sheaf is necessarily a component of the singular support
(note here it is essential we use singular support rather than characteristic cycle, 
where cancellation may arise), so any such will be a component of $SS(f_* \Q) \subseteq \tilde{f}(  df^{-1} (0_Y))$. 

In characteristic $p$, however, we do not have a good notion of 
constructible functions or characteristic cycles.  The problem is wild ramification: 
for instance, the map $$as: \AA^1 \xrightarrow{x \mapsto x^p - x} \AA^1$$ is
a nontrivial cover whose source is a space with Euler characteristic one; 
composing with the pushforward to a point gives a counterexample to the possibility
that the 
naive pushforward of constructible functions is functorial.  No 
general solution to this problem is currently known. 

Theorem B, however, can at least be sensibly stated in any characteristic.  We give a
proof in \S \ref{sec:support} which works generally, under additional assumptions on the codimension of
the higher discriminants, which are automatic in characteristic zero. 

\subsection{Applications}

Theorems A, B, C, factor the problem of understanding $f_* {\mathbf 1}$ or $Rf_* \Q$ 
into two pieces: first, by computing derivatives, determine the loci $\Delta^i(f)$; second,
understand their singularities well enough to compute Euler obstructions or intersection cohomology 
sheaves.  
Note the second
step
{\em no longer depends on the function $f$}.  

We remark briefly on two ways that such results may be used.  One way, following \cite{N}, 
is that given two maps $f: X \to B$ and $g: Y \to B$, one can deduce comparisons of cohomology
on all fibers from comparisons of cohomology on sufficiently generic fibers.  Our results here imply
that ``sufficiently generic'' means ``the general point of each higher discriminant''.  We have used this 
method in \cite{MS, MSV}; or more precisely, our definition here of higher discriminants is distilled
from the method we used in \cite{MS}, and our results here are applied in \cite{MSV}.  

Of course, 
doing so requires computing the higher discriminants.  This, however, is a tangent space calculation. 
For example, in Section \ref{sec:dima}, we describe (following ideas of Dima Arinkin) how the higher
discriminants can be computed in the case of an integrable system.  This recovers the support 
theorem of \cite{N} in this case, but in fact more: it characterizes the microsupport as well.

A slightly different application appears in \cite{ST}.  Here, we are interested in bounding the cohomology 
of fibers of a given map $f:X \to B$ in terms of the cohomology of the general fibre and the singularities 
of the higher discriminants.  The general method of doing this is described here in Section \ref{sec:sab}, 
in part following \cite{Ma}.  

\vspace{2mm} 
\noindent {\bf Acknowledgements:}  
We thank Paolo Aluffi, Daniel Lowengrub,  Ng\^o Bau Chau, Filippo Viviani, Jacob Tsimerman, Melanie Wood, 
and Geordie Williamson for helpful discussions,
and especially Dima Arinkin for the characterization of higher discriminants of integrable systems, 
and Mark A. de Cataldo, who pointed out several inaccuracies.

\section{From singular support to higher discriminants}
In this section varieties are assumed to be real analytic manifolds. By a ``sheaf'' we mean a complex $\fF$ of sheaves of abelian groups, such that 
its cohomology sheaves $\hH^r(\fF)$ are constructible, and vanish for $|r|>>0$. We denote by
$\mathrm{D}^b_c(X)$ the corresponding derived category.
By \cite{V2}, every $\fF$ is constructible with respect to a Whitney stratification.

\subsection{Review of singular support of sheaves and transversality}
To a  complex of sheaves $\fF$ on a real manifold $X$, one can assign the locus of 
co-directions in the cotangent bundle along which it fails to remain locally constant.  This is 
its singular support, a conical Lagrangian subvariety of the cotangent bundle  \cite[Chap.V]{KS}, and \S 3 of \cite{VW}  for a clear recollection of the properties of the singular support. 
A covector $p=(x,\xi) \in T^*X$ does not belong to  $SS(\fF)$ 
if there exists an open neighborhood $U$ of  of $p$ such that for any $x' \in X$ and any real function $\phi$  defined in 
a neighborhood of $x'$ such that $d\phi_{x'} \in U$  and $\phi(x') = 0$, one has 
$$\left(R\Gamma_{\{\phi \geq 0 \}}\fF\right)_{x'}=0.$$
From its very definition $SS(\fF)$ is a conical subset, i.e. invariant by the action of $\RR_{>0}$ on $T^*X$. The definition
of singular support makes sense for an arbitrary bounded complex of sheaves, and by \cite[Theorem 6.5.4]{KS}, $SS(\fF)$ is an involutive (or co-isotropic) subset of $T^*X$. If $\fF$ is constructible then 
$SS(\fF)$ is furthermore Lagrangian (\cite[Theorem 8.4.2]{KS}).

Singular support transforms well under certain natural operations.  
Given a map $f: Y \to X$ of real analytic manifolds, 
we write 
$$T^* Y   \xleftarrow{df} Y \times_X T^* X   \xrightarrow{\tilde{f}}  T^* X $$
We write $f_\dagger :=  \tilde{f} \circ (df)^{-1}$ and $f^\dagger = (df) \circ \tilde{f}^{-1}$ for the corresponding maps between subsets of $T^*Y$ 
and subsets of $T^*X$.  

The convolution $f_\dagger$ controls singular supports of proper pushforwards: 
according to \cite[Prop. 5.4.4]{KS}, we have  
\begin{equation} \label{eq:shvs}
SS(Rf_* \cF) \subseteq f_\dagger SS(\cF)
\end{equation}
if the restriction of $f$ to the support of $\cF$ is proper. 



\begin{definition}\label{trsv}
Given a conical subset  $A \subseteq T^*X$, we say  $f: Y \to X$ is transverse to $A$ if 
$$A \cap f_\dagger (T_Y^* Y) \subseteq T_X^* X,$$
i.e. no nonzero covector $(x,\xi)$ in $A$ annihilates $f_* T_y Y$ for $y \in Y_x$.
We say a submanifold $U \subseteq X$ is transverse to $A$ if the inclusion map is transverse
to $A$.  Finally, we say that a subspace $V \subseteq T_x X$ is transverse to $A$ if no nonzero
covector in $A_x$ annihilates $V$.  
\end{definition}

\begin{remark}\label{rem:dim_triv}
If $A_x$ is a vector subspace of $T_xX^*$, hence of the form $A_x=U^\perp$, with $U \subseteq T_x X$, then no subspace $W \subseteq T_x X$ with 
$\dim W < \dim A_x$ is transverse to $A_x$, while there exist an open dense subset 
of $\dim A_x$-dimensional subspaces of $T_x X$ 
which are transverse to $A_x$. 
\end{remark}
%

\begin{lemma} (Transversality.)  Let $f: Y \to X$, and $g: Z \to X$.   
Then the following are equivalent: 
\begin{itemize}
\item The map $f$ is transverse to $g_\dagger (T_Z^* Z)$. 
\item The map $g$ is transverse to $f_\dagger (T_Y^* Y)$. 
\item At any point $(y, z)$ with $f(y) = g(z)$, we have $f_*(T_y Y) + g_*(T_z Z) = T_x X$. 
\end{itemize}
In this case, $Y \times_X Z$ is smooth.  (In the category of schemes, 
the converse holds as well.)  
\end{lemma}
The singular support behaves well with respect to transverse pullback \cite[Prop. 5.4.13]{KS}: 
if $\cF$ is a sheaf on $X$ and $f: Y \to X$ is transverse to $SS(\cF)$, then 
$$SS(f^* \cF) \subseteq f^\dagger SS(\cF).$$

\subsection{Review of constructible functions}
In this section we work in the complex analytic set-up.
The group of constructible functions ${\mathscr C}(Y)$ on a complex analytic 
variety $Y$ is the free abelian group generated by 
characteristic functions of closed analytic subvarieties.  We recall below some relevant facts; for a detailed treatment
see e.g. \cite[\S 9.7]{KS} or \cite[\S 2.3]{sc}.

A map
$f: X \to Y$ induces a proper pushforward $f_!: {\mathscr C}(X) \to {\mathscr C}(Y)$:
if $W \subseteq X$ is a closed subvariety and ${\mathbf 1}_W$ is its characteristic function, then:
$f_!{\mathbf 1}_W(y):= \chi_c(f^{-1}(y) \cap W)$.  We have written the compactly supported Euler
characteristic to emphasize its additivity, but recall that for algebraic varieties, the compactly supported and
usual Euler characteristics agree (see e.g. \cite[p. 141]{F} or \cite[\S 6.0.6]{sc}). 
We employ Viro's integral notation \cite{Vi}: if $\pi_Y: Y \to \mathrm{point}$ is the structure map, then for $\xi \in \mathscr{C}(Y)$ 
we write $\int_Y \xi d\chi := (\pi_Y)_! \xi$.  Explicitly, 
if $\xi=\sum \xi_{\alpha}{\mathbf 1}_{Y_{\alpha}}$, then 
\[
\int_{Y} \xi d \chi :=\sum_{\alpha}\xi_{\alpha}\chi_c(Y_{\alpha}),
\] 
Because all the strata are locally contractible, we have:

\begin{lemma} Fix $\xi \in \mathscr{C}(Y)$.  Then for any $y \in Y$, there exists $\epsilon_0>0$ such that, for $0 <\epsilon \leq \epsilon_0,$ 
\[
\xi(y)= \!\!\! \int\limits_{ \| z \|< \epsilon } \!\!\! \xi d \chi. 
\]
\end{lemma}

We recall that, for a subvariety $V$, one can define
the Euler obstruction $\mathrm{Eu}_V(x)$ (see \cite{M}),  a constructible function with support on $V$;
$\mathrm{Eu}_V(x)=1$ if $x$ is a smooth point.
It is constant along strata of a Whitney stratification and preserved by taking products with smooth spaces.  
There is a hyperplane formula: assuming $\dim V > 0$ 
and taking a local embedding at some $v \in V \subseteq \CC^n$, 
\begin{equation} \label{eq:eulob}
\int\limits_{\DD^{n-1}} \!\!\! \mathrm{Eu}_V d \chi = \mathrm{Eu}_V(v)
\end{equation}
for a general disc $\DD^{n-1}$ passing near (but not through) $v$ \cite{BDK} (see also
\cite[Thm. 3.1]{BLS} and \cite{sc1}). The functions $\mathrm{Eu}_V$ give a basis for ${\mathscr C}(X)$.

\bigskip
Constructible functions are also associated with constructible sheaves. 
If $\fF \in \mathrm{D}^b_c(X)$ we have the constructible function
\[
[\fF] \in {\mathscr C}(X), \qquad [\fF](x)= \sum (-1)^i \dim {\mathcal H}^i(\fF)_x,
\]
The map $\fF \to [\fF]$ 
factors through the Grothendieck group, and is compatible with pushforward: $f_* [\fF] = [\mathrm{R}f_* \fF]$. 
Clearly ${\mathbf 1}_W=[\QQ_W]$, for a closed subvariety $W$.

We will employ the formalism of nearby and vanishing cycles 
(see \cite[\S 8.6]{KS} for their definition and the convention for  shifts employed here); we recall here enough to fix notation. 
Let  $f:Y \to \CC$ be a regular
function and $X_0= f^{-1}(0)$.   Consider some $x \in X_0$, and fix sufficiently
small $\epsilon \gg |\delta| > 0$.  Then we have the long exact sequence in cohomology
for the $\epsilon$-ball relative to the Milnor fibre: 
$$\HH^* (B_\epsilon(x), B_\epsilon(x) \cap f^{-1}(\delta) ;  \cF) \to 
\HH^*(B_\epsilon(x); \cF) \to \HH^*(B_\epsilon(x) \cap f^{-1}(\delta); \cF) \xrightarrow{[1]}$$
These are in fact the stalks at $x$ of a distinguished triangle of complexes of sheaves on $Y_0$: 
\begin{equation}\label{phipsitri}
\Phi_f \cF \to \cF|_{X_0} \to \Psi_f \cF \xrightarrow{[1]}.
\end{equation}

The functors $\Phi, \Psi$ descend to operators on constructible functions \cite[Prop. 3.4, Prop. 4.1]{V2}.
Explicitly if $X$ is a complex space, $l: X \to \CC$ a holomorphic function,
we may write vanishing and nearby operators on constructible functions 
$\Psi_l,\Phi_l:{\mathscr C}(M) \to {\mathscr C}(l^{-1}(0))$,
setting, for a constructible function $\xi: X \to \Z$, a point $p \in l^{-1}(0)$, and $\epsilon \gg \delta$,
\begin{equation}\label{phiconstr} \Psi_l \xi (p)\,\,\, :=  \!\!\!\!\!\!\!  \int\limits_{B_\epsilon(p) \cap l^{-1}(\delta)} \!\!\!\!\!\!\! \xi d\chi \,\,\,\,\,\,\,\,\,\,\,\,\,\,\,\,\,\,\,\,\,\,\,\,\,\,\,\,\,\, \Phi_l \xi(p) := \xi(p) - \Psi_l \xi(p). 
\end{equation}
If $\fF$ is a constructible sheaf, evidently
$\Phi_l [\fF] = [\Phi_l \fF]$ and similarly $[\Psi_l \fF] = \Psi_l [\fF]$.  

We may rewrite the hyperplane formula (\ref{eq:eulob}) for the Euler obstruction as:
$$ (\Phi_l \mathrm{Eu}_V)(x) = 0 \,\,\,\,\, \mbox{for $\dim V > 0$ and $l: V \to \CC$ a general coordinate function near $x$}. $$

\begin{definition}
(Singular support of a constructible function)
For $\xi \in \mathscr{C}(X),$ a covector $p=(y,\lambda_0) \in T^*X$ does not belong to $SS(\xi)$ 
if there exists an open neighborhood $U$ of  of $p$ such that for any $x' \in X$ and any real function $l$  defined in 
a neighborhood of $x'$ such that $dl_{x'} \in U$  and $l(x') = 0$, one has $\Phi_l(\xi)=0$.
\end{definition}
Note that the definition is compatible with the map
$\mathrm{D}^b_c(X) \to {\mathscr C}(X),$ in the sense that we have the inclusion
$SS([\cF]) \subseteq SS(\cF)$, which may be proper as the example of $\QQ_0\oplus \QQ_0[1]$ on $\AA^1$ shows.
If $\xi$ is constructible with respect to a decomposition $\mathcal S=\{S_\alpha \}$ of $X$, which, 
after refining we may suppose to be a Whitney stratification, then
\begin{equation}\label{eq:constr_fun}
SS(\xi) \subseteq \bigcup T^*_{S_\alpha}X.
\end{equation}

It follows from (\ref{eq:shvs}) that
\begin{equation} \label{eq:con}
SS(f_* {\mathbf 1}) \subseteq f_\dagger (T_Y^* Y) = 
\{(x, \varphi) \in T^* X \, | \, \exists y \in Y_x\,\, \mathrm{s.t.}\, \, \varphi (f_* T_y Y) = 0\}
\end{equation}

\subsection{Higher discriminants}

In order to organize the discussion of transversality, we introduce the {\em higher discriminants}: 

\begin{definition}
Let $C \subseteq T^* X$ be a conical subvariety.  We define
$$\Delta^i(C) := \{x \in X\, | \, \mbox{no $(i-1)$-dimensional subspace of $T_x X$ is transverse to $C$}\}.$$
Clearly, when we work in the complex analytic category, we refer to complex dimension in the definition above.

We write $\Delta^i(C)_{\mathrm{reg}}$ for the locus where $\Delta^i(C)$ is locally a manifold of codimension $i$. 
\end{definition}
\begin{remark}
If $C' \subseteq C$, then $\Delta^i(C') \subseteq \Delta^i(C)$ and $\Delta^i(C')_{\mathrm{reg}} \subseteq \Delta^i(C)_{\mathrm{reg}}$. 
\end{remark}

As we discussed above, by \cite{KS}, the singular support of a {\em constructible}
sheaf or function is conical Lagrangian. If $\fF$ is constructible with respect to the Whitney stratification $\mathcal S=\{S_\alpha \}$ of $X$, 
then 
$$
SS(\fF) \subseteq \bigcup T^*_{S_\alpha}X,
$$
where, given a locally closed submanifold $Z\subseteq X$,  $T^*_{Z}X$ stands for its conormal bundle. Similarly for a function as stated in (\ref{eq:constr_fun}).

\bigskip
It turns out that the higher discriminants control the 
decomposition of a conical Lagrangian into irreducible components.  To state the result precisely,
recall that in the category of real manifolds, conical only means ``invariant under $\RR_{\ge 0}$''; 
we write $\RR C$ to take the negative scalars as well.  This distinction will
be forgotten when we return to complex geometry. 
\begin{theorem} \label{thm:disclag}
If $C \subseteq T^*X$ is a closed conical subanalytic Lagrangian subset, then $\codim \Delta^i(C) \ge i$. 
Moreover, 
$$\RR C = \bigcup_i \overline{T_{\Delta^i(C)_{\mathrm{reg}}}^* X}$$
\end{theorem}
\begin{proof}
Choose a Whitney subanalytic stratification of $X$ so that $\RR C$ is contained in a union of 
closures of conormals to strata (see \cite[Proposition 8.3.10]{KS}).  
Observe that the Whitney condition A amounts to the assertion that if
$S'$ is a stratum inside the closure of $S$, then $\overline{T^*_S X}|_{S'} \subseteq T^*_{S'} X$. 
It follows that $C$ is contained in the union of (not closures of) conormals to strata; increasing
$C$ to this union only enlarges $\Delta^i(C)$ so we are free to do this. 

To now prove the first claim, we need only consider strata of codimension $\le i$, above which $C$
is the conormal, to which any general $i$ dimensional subspace of the tangent space is
transverse. 

As for the ``moreover'', the containment $T_{\Delta^i(C)_{\mathrm{reg}}}^* X \subseteq \RR C$ is obvious.  
For the reverse inclusion, observe that if 
$C \ne \bigcup_i \overline{T_{\Delta^i(C)_{\mathrm{reg}}}^* X}$, then there must be a full dimensional
component $C_0$ of $C$ not included in this union.  At a general point $c_0 \in C_0$, 
we can identify $C_0 = T^*_V X$ for some $V$.  But then $V \subseteq \Delta^{\codim V} (C)$; 
since $\codim \Delta^{\codim V} (C) = \codim V$, we have equality along the smooth locus.  
This is a contradiction. 
\end{proof}

We are interested in applying this result to a conical Lagrangian of the form $f_\dagger (T_Y^* Y)$.
We have immediately (see Remark \ref{rem:dim_triv}):

\begin{lemma} \label{lem:discfun}
Let $f:Y \to X$ be a map of smooth manifolds.  Then $\Delta^i(f_\dagger T_Y^* Y)$ is 
the locus in $X$ of points $x \in X$ such that no $(i-1)$-dimensional subspace of $T_x X$ 
is transverse to $f$. 
\end{lemma}

\begin{definition}(Higher discriminants of a map)
We write $\Delta^i(f) := \Delta^i(f_\dagger T_Y^* Y)$.
\end{definition}

\begin{remark}
If $f$ is proper, then $\Delta^i(f)$ is closed and moreover $\Delta^{>i}(f) \subseteq \Delta^i(f)$, by
Sard's theorem.  
\end{remark}

\begin{definition}(Higher discriminants of constructible sheaves and functions)
If $F$ is a constructible function or a complex of constructible sheaves on $X$, we set 
$\Delta^i(F) := \Delta^i(SS(F))$.  
\end{definition}

Translating the definitions of the higher discriminants
and the singular support, we have: 

\begin{lemma}
$\Delta^i(F)$ is the locus of $x \in X$ such that for the general $i-$dimensional local complete intersection $\DD^i \ni x$ and general 
linear form $l: (\DD^i,x) \to (\CC,0)$ 
we have $\Phi_l(F|_{\DD^{\,i}})(x) \ne 0$. 
\end{lemma}

Theorems A,B,C, whose statements we recall, follow immediately from the discussion above:

\vspace{3mm} {\bf Theorem C.}
If $f: Y \to X$ is a proper map of smooth algebraic varieties, $f_\dagger (T_Y^* Y)=\tilde{f}(  df^{-1} (T^*_Y Y))$ is the union 
over all $i$ of
the conormals to all the $i$-codimensional components of $\Delta^i(f)$. 
\vspace{3mm}
\begin{proof}
Follows from Theorem \ref{thm:disclag} and Lemma \ref{lem:discfun}. 
\end{proof} 

\vspace{3mm} {\bf Theorem A.} 
If $f:Y \to X$ is a proper map of algebraic varieties, then
any component of the characteristic cycle of $f_*{\mathbf 1}_Y$ is the conormal variety to an $i$-codimensional
component of $\Delta^i(f)$ for some $i$.
\begin{proof}
Follows from Theorem C by Equation \ref{eq:con}, i.e., 
\cite[Prop. 5.4.4]{KS}. \end{proof} 

\vspace{3mm} {\bf Theorem B.}
If $f: Y \to X$ is a projective map of algebraic varieties,  then
any component of the support of a summand of $Rf_* IC_Y$ is an $i$-codimensional component
of $\Delta^i(f)$ for some $i$.
\vspace{3mm}
\begin{proof}
The statement follows by applying the decomposition theorem and 
then the observation that if
$\cF$ is a summand of $Rf_* \Q_Y$, then the conormal to the support of $\cF$ is
a component of $SS(Rf_* \Q_Y)$. 
\end{proof}

We end this section with two examples from complex geometry clarifying the notion of higher discriminant:
\begin{example}(Higher discriminants and stratification by  rank of the differential)
The locus $\Delta^i(f)$
{\em may be larger} than the image of the locus in
$Y$ where $df$ has cokernel of dimension at least $i$:
Let $f$ be the blow-up of a point $p$ on a smooth surface: the  image of $df$ has dimension one at every point of the exceptional divisor.
Nevertheless $p \in \Delta^2(f)$: no one-dimensional disc in the base is transverse to all these tangent space images simultaneously, and indeed
the inverse image of every one-dimensional disc $\DD$ through $p$ will be given by the union of 
the proper transform of $\DD$ and the exceptional divisor, which is singular.
\end{example}
On the other hand it may well happen that a stratum for the map is not a higher discriminant:
\begin{example}({The versal deformation of the cusp curve})
\label{versdefcusp}
Let $f:X \subseteq \AA^2 \times \PP^2 \to \AA^2=Y$ be the family of projective curves 
\begin{equation}
\{(a,b,[X,Y,Z])\in \AA^2 \times \PP^2, ZY^2-X^3-aXZ^2-bZ^3=0\}.
\end{equation}
If $\Delta \subseteq \AA^2 $ is defined by 
$$\{(a,b)\in \AA^2, 4a^3 + 27b^2 = 0 \},$$ a stratification for the map is given by 
$$
\{(0,0) \subseteq \Delta \subseteq \AA^2 \}.
$$
Nevertheless, just as for the other points of $\Delta$, 
the inverse image of a one-dimensional complex disc through the origin has nonsingular total space.
Therefore $\Delta^2(f)=\emptyset$, and $\Delta^1(f)=\Delta$.
\end{example}

\begin{remark} 
\label{semismall-puzzle}
Let $f:Y \to X$ be a semismall-map, with $Y$ nonsingular, and assume $f(Y)=X$.
Define $X_i=\{x \in X \mbox{ such that } \dim Y_x=i\}$. A component $X_\alpha$ of $X_i$ supports a summand 
of $Rf_* \QQ_Y[\dim Y]$ if and only if $\dim X_\alpha=\dim X-2i$ (these are the {\em relevant strata}, see \cite{BM, dCM}).
It follows from \ref{thm:MS} that in this case $\overline{X_\alpha}\subseteq \Delta^{2i}(f)$. This means that there is no $2i-1$-dimensional disc through 
the general point of $X_\alpha$ with nonsingular inverse image of the right codimension. Notice that the consideration of the dimension of the fibre would give a much cruder 
estimate, namely that  there is no $i$-dimensional disc with nonsingular inverse image of the right codimension. For example, if a point $x$ is  a zero-dimensional relevant stratum, namely  $\dim Y_x=1/2 \dim Y$, there is no Cartier divisor through $x$ whose inverse image is a nonsingular Cartier divisor in $Y$. 
\end{remark}

\section{Theorem A: reformulations and applications}
Here we further spell out the consequences of Theorem A, and make some applications
to the problem of estimating cohomology in families. Throughout this section we work with complex varieties.

%

We now give some corollaries of Theorem \ref{thm:disclag}. 
We recall the relation between characteristic cycles, 
Euler obstructions, and conormal varieties.  For $Y$ smooth and $S \subseteq Y$,
we write $\overline {T_S^*Y}$ for the closure of the conormal bundle of the smooth locus of $S$. 
For a constructible function $f$ we write $CC(f)$ for the characteristic cycle of $f$.  (Usually
the characteristic cycle is defined for sheaf complexes or D-modules, but in any case it factors 
through the Grothendieck group and depends only on the underlying constructible function.) 

\begin{corollary} \label{cor:CC}
Let $X$ be a complex variety, and let $\xi$ be a constructible function on $X$.  
Let $\{ \Delta^{i,\alpha}\}_{\alpha}$ be the $i$-codimensional components of $\Delta^i(\xi)$. 
For $x_{i,\alpha} \in \Delta^{i,\alpha}$ a general point, $X \supset \DD^i \ni x_{i,\alpha}$ 
a general $i-$dimensional disc, and $l: \DD^i \to \CC$ a general linear form, 
define $\xi^{i,\alpha} = \Phi_l (\xi|_{\DD^i})(x_{i,\alpha})$.
Then 
$$CC(\xi) = \sum (-1)^i \xi^{i,\alpha} \overline{T_{\Delta^{i,\alpha}}^{\,*} X}$$
\end{corollary}
\begin{proof}
The only new ingredient beyond Theorem \ref{thm:disclag} is the provision of the
coefficients of the characteristic cycle; this however is essentially a tautology on 
the definition of characteristic cycle in terms of vanishing cycles (see the discussion in \cite{Br}, \S \, I.1). 
\end{proof}
The Brylinski-Dubson-Kashiwara index theorem \cite{BDK, G} asserts
$CC(\mathrm{Eu}_S) = (-1)^{\codim S} [\overline{T_S^* Y}]$, and we conclude

\begin{corollary}
With the hypotheses and notation of Corollary \ref{cor:CC}, 
$$\xi = \sum_{i,\alpha} \xi^{i,\alpha} \mathrm{Eu}_{\overline{\Delta^{i,\alpha}}}$$
\end{corollary}

We were originally interested in the setting where $f:Y \to X$ was a proper map of
complex varieties.  In this setting, we have: 

\begin{corollary} \label{cor:CC1}
Let $f: Y \to X$ be a proper map of complex varieties.  Let $\{ \Delta^{i,\alpha}\}_{i, \alpha}$ be the codimension $i$
components of $\Delta^i(f)$.  Then 
\begin{eqnarray*}
CC(f_* {\mathbf 1}_Y) & = & \sum_{i, \alpha} (-1)^i \xi^{i,\alpha} \overline{T_{\Delta^{i,\alpha}}^{\,*} X}\\
f_* {\mathbf 1}_Y & = & \sum \xi^{i,\alpha} \mathrm{Eu}_{\overline{\Delta^{i,\alpha}}} 
\end{eqnarray*}
The coefficients $\xi^{i,\alpha} $ are defined as in Corollary \ref{cor:CC}; though note by proper
base change they can instead be computed by studying vanishing cycles on $Y$. 
\end{corollary}
\begin{proof}
The essential statement here is that the codimension $i$
components of $\Delta^i(f) := \Delta^i(f_\dagger T_Y^* Y)$ 
are among the codimension $i$ components of 
$\Delta^i(f_* {\mathbf 1}_Y) := \Delta^i(SS(f_* 1_Y))$.   This follows formally from the fact \cite{KS} that
$SS(f_* {\mathbf 1}_Y) \subseteq f_\dagger SS({\mathbf 1}_Y) = f_\dagger T_Y^* Y$. 
\end{proof}

\begin{remark}
When $X$ is singular, in addition to ${\mathbf 1}_X$ we may consider 
$[\mathrm{IC}_X]$ for the IC sheaf in any perversity; we again have 
$\Delta^i(f_* [\mathrm{IC}_X]) \subseteq \Delta^i(f)$.  Additionally, 
by \cite{BBD}, the perverse cohomology sheaf ${}^p \mathscr{H}^k(Rf_* \mathrm{IC}_X)$ 
is a summand of $Rf_* \mathrm{IC}_X$, so we have 
$$\Delta^i([{}^p \mathscr{H}^k(Rf_* \mathrm{IC}_X)])\subseteq \Delta^i({}^p  \mathscr{H}^k(Rf_* \mathrm{IC}_X)) \subseteq \Delta^i(Rf_* \mathrm{IC}_X) \subseteq \Delta^i(f).$$
Thus these functions
also can be expanded in the Euler obstructions of the higher discriminants
{\em of the map $f$}.
\end{remark}

Finally, let $c_{SM}$ denote the Chern-Schwarz-MacPherson transformation from constructible functions to homology \cite{M}.  By definition
this is $c_{SM}(\mathrm{Eu}_{V}) = c^M(V)$, where $c^M$ is the Chern-Mather class. 
For $Y$ smooth we have $c_{SM}({\mathbf 1}_Y) = c(TY) \cap [Y]$; and the main result of \cite{M} is that $c_{SM}$ commutes with proper pushforward.
\begin{corollary}
Let $f:Y \to X$ be a proper map of complex varieties.  Let $\Delta^{i,\alpha}$ be the codimension $i$
components of $\Delta^i(f)$.  Then
\[f_* c_{SM}({\mathbf 1}_Y) = c_{SM}(f_* {\mathbf 1}_Y) = \sum \xi^{i,\alpha} c^M(\Delta^{i,\alpha}). \] 
\end{corollary}

The following is useful: 

\begin{lemma}
If $\fF$ is perverse, the closures of the $i$-codimensional components of  $\Delta^i(\fF)$ and  $\Delta^i([\fF])$ coincide.
\end{lemma}
\begin{proof}
Take a Whitney stratification for $\fF$. We need to check only at a generic point $y$ of a codimension $i$ stratum $S$. 
The restriction to a disc $\DD^i \ni y$ transverse to $S$ is, up to shifting by $[-i]$, perverse, hence so is the vanishing cycle 
along a general linear form.  The vanishing cycles in a small enough neighborhood of $y$ and
with respect to a general linear form are supported at a point; being perverse they therefore 
have (usual) cohomology sheaves in only one dimension.  Thus the vanishing of this vanishing cycle
is equivalent to the vanishing of its Euler characteristic. 
\end{proof}

\subsection{Remarks on the singular case}  \label{sec:sing} The above ideas make immediate
sense when $X$ is singular: one simply
chooses locally an embedding of $X$ into a smooth manifold.  
Below we comment on what can be done when $Y$ is singular.  
We omit the proofs, both because they are standard and because
although it is possible to set up the theory, we do not know in practice 
how to compute the higher discriminants.

Fix an embedding of $Y$ in a smooth manifold $\widetilde{Y}$.  
Let $\cS$ be the canonical Whitney stratification of $(\widetilde{Y}, Y)$ constructed in Chaper VI, \S 3 of \cite{T2}; 
let $T_{\cS}^* Y$ 
be the union of all 
conormals to strata in $Y$; by the Whitney conditions this is closed.  
  

Given a map $f:Y \to X$, fix an extension $\widetilde{f}: \widetilde{Y} \to X$.  Then we can
take $f_\dagger T^*_Y Y := \widetilde{f}_\dagger (T^*_Y Y)$; 
which gives
a conical subvariety of $X$.  We can again define $\Delta^i(f):= \Delta^i(f_\dagger T^*_Y Y)$.  

Lemma \ref{lem:discfun} has the following variant: $\Delta^i(f)$ is the locus of $y \in Y$ such that
there is no $(i-1)$ germ of a complete intersection $\DD^{i-1}$ passing through $y$ for which the inclusion  
 $f^{-1}(\DD^{i-1}):=(X \times_Y \DD^{i-1}) \hookrightarrow X$ is normally nonsingular, i.e. it admits a neighborhood in $X$ homeomorphic to a bundle over itself and ${\rm codim}(\DD^{i-1}, Y) ={\rm codim}(f^{-1}(\DD^{i-1}), X)$. 

With these definitions, Theorem C again holds.  In Theorem A, it is natural to replace $\Q_Y$ 
with the intersection cohomology sheaf with respect to some perversity; the result holds in all
of these cases because, with the above definitions, using \cite[\S 5.4.1]{GM2} we have 
$SS(IC_Y) \subseteq T_Y^* Y$ hence 
$SS(f_* IC_Y) \subseteq f_\dagger(SS(IC_Y)) \subseteq f_\dagger T_Y^* Y$ hence
$\Delta^i( SS(f_* IC_Y) ) \subseteq \Delta^i(f)$.  Theorem B will hold with $\Q_Y$ replaced
by the middle-perverse intersection cohomology sheaf.

\section{Theorem B in characteristic p} \label{sec:support}

Over positive characteristic fields, easy examples with inseparable morphisms show that 
the estimate (\ref{codest}) on the codimension of the higher discriminants may not hold.
Furthermore, even assuming that the estimate (\ref{codest}) holds, 
the methods used in the previous sections cannot be extended.
In the recent preprint \cite{B}, Beilinson defines a notion of singular support for \'etale constructible sheaves on an algebraic variety $X$ over an arbitrary field, as a closed subset of $T^*X$, 
and proves that every irreducible component has dimension $\dim X$. However, the examples given in \cite{B}, 
show that this is not as well behaved as in characteristic zero, in particular the singular support is not 
necessarily  a union of conormals to subvarieties of $X$.    
Here we give another proof of Theorem B, for the direct image of the constant sheaf on a nonsingular variety,
valid also over positive characteristic fields, under the supplementary hypothesis that the codimension estimate (\ref{codest}) holds.

\medskip
Throughout we employ the formalism of perverse sheaves \cite{BBD}.  
For $Z$ an algebraic variety and $K$ a constructible complex $K \in \mathrm{D}^b_c(Z) $, 
we denote by $^p\!\!{\mathscr H}^i(K)$ its $i$-th perverse cohomology sheaf.  We adopt the usual shift convention: 
if $Z$ is smooth, then $\QQ_Z[\dim Z]$ (or $\QQ_{l , \,Z}[\dim Z]$ in the \'etale setting) is perverse.  We work in some setting 
where a formalism of weights is available; either with $\ell$-adic sheaves over
a finite field $\FF_q$, or with mixed Hodge modules over $\CC$. We say that a complex  $K \in \mathrm{D}^b_c(Z) $ is {\em pure semisimple of weight $k$} if it is isomorphic to the direct sum of its perverse cohomology sheaves, and, for every $i$,  
$^p\!\!{\mathscr H}^i(K)$ is a semisimple pure perverse sheaf of weight $i+k$.

We reformulate the definition of higher discriminants of a map in this context:

\medskip
\noindent 
{\em Terminology:} Let $X$ be a nonsingular variety. 
Given $x \in X$,  by ``a $k-$dimensional disc through $x$" $\DD^{k} \hookrightarrow X$, 
we mean a germ of nonsingular $k-$dimensional subvariety passing through $x$.

\begin{definition} \label{def:homologicaldiscsing} 
 Let $f: Y \to X$ be a proper map of nonsingular algebraic
varieties. A $k-$dimensional disc $\DD^{k} \hookrightarrow X$ through $x \in X$ is {\em transverse} to $f$
if 
$ f^{-1}(\DD^{k})  $ is  nonsingular along  $f^{-1}(x)$ and 
$\mathrm{codim}(f^{-1}(\DD^{i-1}),Y)=\mathrm{codim}(\DD^{i-1},X)$.
\end{definition}

\begin{definition}
 Let $f: Y \to X$ be a proper map of nonsingular algebraic
varieties.
\begin{equation}
\begin{split}
\Delta^i(f)= \{ x \in X \text{ s.t. there is no }\DD^{i-1} \hookrightarrow X \text{ through } x 
\text{ transverse to } f \}.
\end{split}
\end{equation}


\end{definition}	

\begin{remark}
The definition generalizes immediately to the case of $X$ locally embeddable 
into a smooth variety; all the results hold mutatis mutandis for such $X$. 
\end{remark}

\subsection{Supports are discriminants}

The decomposition theorem of \cite{BBD} asserts that if $f: Y \to X$ is proper and $Y$ is nonsingular, 
then $Rf_* \QQ$ (or $Rf_* \QQ_\ell$) is pure semisimple.  
In other words, there are some nonsingular locally closed subvarieties 
$V_i$
carrying semi-simple local systems $L_i$ so that $Rf_* \QQ = \bigoplus \mathrm{IC}(V_i, L_i)[d_i]$,
for appropriate $d_i \in \ZZ$. 

\begin{theorem}\label{thm:MS}Let $f:Y \to X$ be a projective map between 
algebraic varieties, with $Y$ nonsingular.   
Assume $\codim \Delta^i(f) \ge i$. 
Then every summand
of $Rf_* \QQ_\ell$ is supported on the closure of an $i$-codimensional component of some discriminant 
$\Delta^i(f)$.\end{theorem}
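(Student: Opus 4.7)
The plan is to argue by contradiction. Suppose $\mathrm{IC}(\bar V,L)[d]$ is a summand of $Rf_*\mathrm{IC}_X$, set $i:=\codim(V,Y)$, and take a generic point $y\in V$ lying in the smooth locus where $L$ is defined. If $y\notin\Delta^i_h(f)$, pick an $(i-1)$-disc $j:\DD^{i-1}\hookrightarrow Y$ through $y$ for which equation (\ref{equacodim}) holds, and write $Z=f^{-1}(\DD^{i-1})$, $j':Z\hookrightarrow X$, $g:Z\to\DD^{i-1}$, and $c=\codim(\DD^{i-1},Y)$. Proper base change and (\ref{equacodim}) together yield $j^*Rf_*\mathrm{IC}_X\simeq Rg_*(j')^*\mathrm{IC}_X\simeq Rg_*\mathrm{IC}_Z[c]$.

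The conceptual heart of the argument is to interpret (\ref{equacodim}) as the Kashiwara--Schapira non-characteristic condition for $j'$ with respect to $\mathrm{IC}_X$. By Kashiwara's estimate for the characteristic variety of a proper direct image, non-characteristicness transfers to the target: $j$ is non-characteristic for $Rf_*\mathrm{IC}_X$. Since the characteristic variety is additive on direct summands, $j$ is in particular non-characteristic for the summand $\mathrm{IC}(\bar V,L)$, whose characteristic variety near $y$ is the conormal $T^*_V Y$. Equivalently, in $T^*_y Y$ one has $T^*_{\DD^{i-1},y}Y\cap T^*_{V,y}Y=\{0\}$.

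A dimension count now yields the contradiction: $\dim T^*_{\DD^{i-1},y}Y+\dim T^*_{V,y}Y=(\dim Y-i+1)+i=\dim Y+1 > \dim T^*_y Y$, so the two linear subspaces necessarily share a nonzero covector. Hence $y\in\Delta^i_h(f)$, so $\bar V\subseteq\overline{\Delta^i_h(f)}$; combined with Lemma \ref{dimestimate}, $\bar V$ must be an $i$-codimensional component of $\overline{\Delta^i_h(f)}$.

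The main obstacle will be the microlocal step: verifying that (\ref{equacodim}) genuinely encodes non-characteristicness in the possibly singular setting (where $\mathrm{IC}_X$ is not merely a shifted constant sheaf), and that Kashiwara's characteristic variety estimate for projective pushforwards is sharp enough to transport the condition from $\mathrm{IC}_X$ to $Rf_*\mathrm{IC}_X$. As a more hands-on alternative one can apply the decomposition theorem to $g$ so that $Rg_*\mathrm{IC}_Z[c]$ is a sum of shifted simple IC sheaves on $\DD^{i-1}$, and then compare with the $j^*$-decomposition of $Rf_*\mathrm{IC}_X$ via perverse $t$-structure considerations on the two sides of $j^*Rf_*\mathrm{IC}_X\simeq Rg_*\mathrm{IC}_Z[c]$, but the essential input remains microlocal.
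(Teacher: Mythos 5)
Your reduction via proper base change and your dimension count at the end are fine, but the central step --- ``interpret (\ref{equacodim}) as the Kashiwara--Schapira non-characteristic condition'' --- is not just an unverified obstacle, it is false, and the paper's own first example shows it. Take $f:\CC\to\CC$, $z\mapsto z^2$. The $0$-dimensional disc $\{0\}$ satisfies (\ref{equacodim}) (indeed $\Delta^1_h(f)=\emptyset$ for any finite map), yet $Rf_*\mathrm{IC}_\CC=\mathrm{IC}_\CC\oplus\mathrm{IC}(\CC,L)$ with $L$ the nontrivial rank-one local system on $\CC^*$, and the microsupport of the second summand contains the whole fibre $T^*_0\CC$; so the disc is characteristic for $Rf_*\mathrm{IC}_X$ even though (\ref{equacodim}) holds. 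The point is that the homological condition defining $\Delta^i_h$ is deliberately \emph{weaker} than stratified or microlocal transversality (that is exactly what makes Theorem \ref{thm:MS} stronger than the analogous statement for $\Delta^i$): transversality implies the restriction formula (\ref{equacodim}), but not conversely, and the microsupport of $Rf_*\mathrm{IC}_X$ is in general much larger than anything (\ref{equacodim}) can see --- compare the remark after Corollary \ref{cor:CC}. So your chain ``(\ref{equacodim}) $\Rightarrow$ $j$ non-characteristic for $Rf_*\mathrm{IC}_X$ $\Rightarrow$ contradiction with $T^*_VY\subset SS$'' breaks at the first arrow, and with it the whole argument; Kashiwara's estimate cannot rescue it because there is no microlocal hypothesis upstairs to transport. (A secondary issue: the ``non-characteristic for $j'$'' condition you invoke concerns $T^*_ZX$ with $Z=f^{-1}(\DD^{i-1})$ possibly singular, which is not the condition that pushes forward; the relevant one is about $T^*_{\DD^{i-1}}Y$ transported through $f$ against $SS(\mathrm{IC}_X)$.)

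The paper's proof is not microlocal at all. It exploits the symmetry ${}^p\!\mathscr{H}^i\simeq{}^p\!\mathscr{H}^{-i}$ coming from relative Hard Lefschetz (the class $\mathfrak{S}(Y)$ of Definition \ref{classS}) together with the BBD restriction-to-a-Cartier-divisor lemma (Lemma \ref{lem:div}): if $K\in\mathfrak{S}(Y)$ and $i^*K[-1]\in\mathfrak{S}(D)$ for a divisor $D$, then no summand of $K$ is supported inside $D$ (Corollary \ref{restrictionDT}), because a summand supported in $D$ would lie in $\mathfrak{S}$ both shifted and unshifted. Combined with (\ref{equacodim}), base change, and the decomposition theorem for $X_D\to D$, this rules out point supports outside $\Delta^{\dim Y}_h(f)$; the general case then follows by slicing with a generic $k$-disc through a general point of the support (using Lemma \ref{dimestimate} to find such a disc satisfying (\ref{equacodim}), and a genericity-of-flags argument to compare discs $\DD^{k-1}\subset\DD^k$). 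Your suggested fallback of applying the decomposition theorem to $g$ and comparing perverse structures points in this direction, but the essential input you would need is precisely this Hard Lefschetz symmetry plus Lemma \ref{lem:div}, not a microlocal statement.
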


Often, even in characteristic $p$, it is possible to just directly make the
tangent space calculation determining the $\Delta^i(f)$ and directly show that
$\Delta^i(f)$ has codimension $\geq i$. For an application, see \cite{MSV}. 

\vspace{2mm}
We turn to giving the proof of Theorem \ref{thm:MS}.  First we develop some preliminary notions. 

\begin{definition}
\label{classS}
We write $\mathfrak{S}(Z) \subseteq \mathrm{D}^b_c(Z)$ for the set of complexes $K$ which are (1) pure semisimple, 
and moreover (2) are symmetric in the sense that
$^p\!\!{\mathscr H}^i(K)\simeq \, ^p\!\!{\mathscr H}^{-i}(K)(-i)$ for every $i \in \ZZ$. 
\end{definition}


\begin{remark}
\label{trivialities}
We have:
\begin{enumerate}
\item
If $K, K[r] \in \mathfrak{S}(Z)$, then either $K = 0$ or $r = 0$. 
\item If $K=K_1\oplus K_2$, and two of the three complexes are in $\mathfrak{S}(Z)$, then
so is the third.
\item If $K \in \mathfrak{S}(Z)$ and $K_\Lambda$ is the sum of the summands of $K$ with support exactly equal to 
$\Lambda \subseteq Z$, then $K_\Lambda \in \mathfrak{S}(Z)$. 
\end{enumerate}
\end{remark}

\begin{lemma} \label{lem:div} \cite[Corollaire 4.1.12]{BBD}  If $P$ is a {\em simple} perverse sheaf on $Z$, 
and $i: D \hookrightarrow Z$ is a Cartier divisor, then either 
\begin{enumerate}
  \item $\mathrm{supp} P \subsetneq D$ and $i^*P[-1]$ is perverse, or
  \item $\mathrm{supp} P \subseteq D$ and $i^*P$ is perverse.
\end{enumerate}
\end{lemma}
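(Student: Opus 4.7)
The plan is to treat the two cases separately.

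For case (2), when $\mathrm{supp}\,P\subseteq D$, I would invoke the standard fact that the functor $i_*:\mathrm{Perv}(D)\to\mathrm{Perv}(Z)$ is $t$-exact and fully faithful, with essential image exactly the perverse sheaves on $Z$ supported set-theoretically on $D$. Thus $P\cong i_*Q$ for some simple perverse sheaf $Q$ on $D$, and since $i^*i_*\cong\mathrm{Id}$ for a closed immersion, $i^*P\cong Q$ is perverse.

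For case (1), i.e.\ when $P$ is not supported on $D$, the restriction $P|_U$ to $U:=Z\setminus D$ is a nonzero simple perverse sheaf, and simplicity of $P$ forces $P\cong j_{!*}(P|_U)$, where $j:U\hookrightarrow Z$ is the open complement. The defining property of the intermediate extension then yields ${}^p\mathcal{H}^k(i^*P)=0$ for all $k\geq 0$, i.e.\ $i^*P\in{}^pD^{\leq -1}(D)$. To conclude that $i^*P[-1]$ is perverse, one still needs the reverse bound $i^*P\in{}^pD^{\geq -1}(D)$, and this is where the Cartier hypothesis enters.

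The main obstacle is exactly this lower bound. My plan is to work locally, writing $D=\{g=0\}$ for a regular function $g$, and to use the distinguished triangle
\[
i^*K\longrightarrow\psi_gK\longrightarrow\phi_gK\xrightarrow{+1}
\]
valid for any constructible complex $K$. The crucial input is that for $K$ perverse, both $\psi_gK[-1]$ and $\phi_gK[-1]$ are perverse on $D$; this is a standard property of nearby and vanishing cycles. Substituting into the triangle pins $i^*K$ in perverse cohomological degrees $\{-1,0\}$ for any perverse $K$, and combined with the degree-$0$ vanishing from case (1) this gives that $i^*P[-1]$ is perverse, completing the proof.
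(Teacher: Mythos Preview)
The paper does not give its own proof of this lemma; it simply cites \cite[Corollaire~4.1.12]{BBD}. Your argument is correct.

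It is worth noting that your route to the lower bound $i^*K\in{}^pD^{\geq -1}$ differs from the one in \cite{BBD}. There, the bound $i^*K\in{}^pD^{[-1,0]}$ for any perverse $K$ is obtained directly from Artin's vanishing theorem for affine morphisms: since $D$ is Cartier, the open complement $j:U\hookrightarrow Z$ is affine, so $j_!$ and $j_*$ are $t$-exact for the perverse $t$-structure, and the triangle $j_!j^*K\to K\to i_*i^*K\xrightarrow{[1]}$ (together with its dual) pins $i^*K$ in perverse degrees $\{-1,0\}$. Your approach via the nearby/vanishing triangle and the $t$-exactness of $\psi_g[-1]$ and $\phi_g[-1]$ is equally valid; note, however, that the $t$-exactness of these functors is itself typically proved using the same Artin vanishing (or its Verdier dual), so the two arguments share the same underlying input. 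In both cases the Cartier hypothesis enters precisely through the local existence of $g$, equivalently the affineness of $j$.
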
 

\begin{corollary}\label{restrictionDT}
Let $Z$ be a  variety, let
$i: D \hra Z$ denote the closed immersion of a Cartier divisor.
If $K \in \mathfrak{S}(Z)$ and $i^* K[-1] \in \mathfrak{S}(D)$, then
$D$ does not contain the support of any summand of $K.$
\end{corollary}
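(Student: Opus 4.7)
The plan is to argue by contradiction. I collect all summands of $K$ whose support is contained in $D$ into a single complex $K_D$, and let $K'$ denote the sum of the remaining summands, so that $K = K_D \oplus K'$. Because $\mathfrak{S}(Z)$ is visibly closed under direct sums and because, by Remark~\ref{trivialities}(3), each $K_\Lambda$ with $\Lambda \subseteq D$ lies in $\mathfrak{S}(Z)$, the grouping $K_D$ itself lies in $\mathfrak{S}(Z)$. Then Remark~\ref{trivialities}(2), applied to the splitting $K = K_D \oplus K'$, gives $K' \in \mathfrak{S}(Z)$ as well.

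Next I would analyze how $i^{*}(-)[-1]$ acts on each piece via Lemma~\ref{lem:div}. Every simple summand $P$ of $K'$ has $\mathrm{supp}(P) \not\subseteq D$, so case (1) applies and $i^{*}P[-1]$ is perverse; consequently $i^{*}K'[-1]$ is a direct sum of shifted semisimple perverse sheaves whose shifts are exactly those appearing in $K'$, so the symmetry of the shifts is preserved and $i^{*}K'[-1] \in \mathfrak{S}(D)$. On the other hand, every simple summand $P$ of $K_D$ has $\mathrm{supp}(P) \subseteq D$, so case (2) applies and $i^{*}P$ is already perverse; the same reasoning gives $i^{*}K_D \in \mathfrak{S}(D)$.

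The finishing step combines these with the hypothesis. Since $i^{*}K[-1] \in \mathfrak{S}(D)$ and $i^{*}K'[-1] \in \mathfrak{S}(D)$, Remark~\ref{trivialities}(2) applied to $i^{*}K[-1] = i^{*}K_D[-1] \oplus i^{*}K'[-1]$ forces $i^{*}K_D[-1] \in \mathfrak{S}(D)$. But $i^{*}K_D$ is itself already in $\mathfrak{S}(D)$, so Remark~\ref{trivialities}(1) compels $i^{*}K_D = 0$. Finally, $K_D$ is supported on $D$ and $i$ is a closed immersion, so $K_D \simeq i_{*}i^{*}K_D = 0$, contradicting the assumption that $K$ had a summand supported in $D$.

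I do not expect a serious obstacle: the argument is essentially a bookkeeping of shifts combined with the dichotomy provided by Lemma~\ref{lem:div}. The only subtlety worth flagging is the asymmetry between the two cases of that lemma, which is exactly what drives the contradiction: restriction followed by the shift $[-1]$ preserves the perverse degree of summands not supported in $D$ but increases it by one for summands supported in $D$, and a nonzero complex cannot live at both degree patterns inside $\mathfrak{S}(D)$.
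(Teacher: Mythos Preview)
Your argument is correct and follows the paper's proof essentially step for step. One small point to tighten: Lemma~\ref{lem:div} only tells you each $i^*P[-1]$ is \emph{perverse}, not semisimple, so the semisimplicity needed for $i^*K'[-1]\in\mathfrak{S}(D)$ must be drawn from the hypothesis $i^*K[-1]\in\mathfrak{S}(D)$ (of which $i^*K'[-1]$ is a direct summand, hence has semisimple perverse cohomology), exactly as the paper makes explicit.
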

\begin{proof}
Write $K=K' \oplus i_*K''$, where $i_*K''$ is the direct sum of all the summands whose support is contained in $D$.
By the remark, $K', i_* K'' \in \mathfrak{S}(Z)$.  By hypothesis, $i^*K$ and hence $i^*K'$ and $i^* i_* K'' = K''$ have 
semisimple perverse cohomology sheaves.  By Lemma \ref{lem:div}, (up to Tate twists) 
\[  {}^p\!\mathscr{H}^k(i^* K' [-1]) = i^* {}^p\! \mathscr{H}^k(K') [-1] \simeq  i^* {}^p\! \mathscr{H}^{-k}(K')[-1]  =   {}^p\! \mathscr{H}^{-k}(i^* K' [-1]) \]
and so $i^* K'[-1] \in \mathfrak{S}(D)$.  Thus since $i^* K[-1] = i^* K'[-1] \oplus K''[-1] \in \mathfrak{S}(D)$, we have $K''[-1] \in \mathfrak{S}(D)$. 
On the other hand obviously $K'' \in \mathfrak{S}(D)$, which is a contradiction unless $K'' = 0$. 
\end{proof}

\begin{proof}(of Theorem \ref{thm:MS}) 
As $f$ is projective and, by smoothness of $Y$, the sheaf $\cF:=\QQ_\ell[\dim Y]$ is pure and perverse,
it follows from the decomposition theorem and the relative Hard Lefschetz theorem that $Rf_* \cF \in {\mathfrak S}(Y)$. 

We first show that
the $0$-dimensional supports of the summands of $Rf_* \cF$ are contained in $\Delta^{\dim Y}(f)$.
Indeed, if $y \notin \Delta^{\dim Y}(f, \cF)$, then by definition there exists a divisor $i: D \to Y$, containing $y$, such that
$f^{-1}(D)$ is nonsingular in a neighborhood of $f^{-1}(y)$.  By proper base change
$ R(f|_{D})_* (i^* \cF[-1]) =i^*Rf_* \cF[-1]$ on a neighborhood of $y$.  
By Corollary \ref{restrictionDT}, there is no summand of $Rf_* \cF$ supported on $y$.  

In the general case, let $Z$ be the support of a simple summand $\cG$ of $Rf_*\cF $, and denote by $k$ its codimension.
 By the 
codimension hypothesis $Z \setminus \Delta^{k+1}(f)$ is dense in $Z$.
Let $z \in Z \setminus \Delta ^{k+1}(f)$: it will suffice to show $z \in \Delta^k(f)$. 
Indeed, suppose otherwise:  By the assumption on $Z$, we know that for some $k-1$ dimensional disc $\DD^{k-1}$ through $z$, 
we have  $f^{-1}(\DD^{k-1})$ is nonsingular.  Since nonsingularity is an open condition, there is a $k$ dimensional disc $\DD^{k}$ 
through $z$ containing $\DD^{k-1}$ such that $f^{-1}(\DD^{k})$ is nonsingular.
Consider  the restriction 
$f_|: f^{-1}({\DD^k}) \to \DD^k$.  By proper base change there
is a non-zero summand  of $Rf_{|*} \cF[k-\dim Y]|_{{\DD}^k}$ supported at $z$, but also a codimension one disc through $z$ transverse to $f_|$ and we find a contradiction from what proved in the first step.  
\end{proof}

\begin{remark}
In \cite[\S 5]{MS}, we proved (but did not state) a weaker form of Theorem \ref{thm:MS}. 
\end{remark}

\begin{remark}
One might hope for a more precise form of Theorem \ref{thm:MS}, holding for the direct image of a pure perverse sheaf
$\cF$ on $Y$, in which the role of the higher discriminants $\Delta^i(f)$ is played by
the "discriminants of pure perversity" $\Delta^i_{p}(f, \cF)$, defined as follows: Say that a disc through $x\in X$ is transverse if 
$\cF|_{f^{-1}(\DD^{i-1})}[-{\rm codim}(\DD^{i-1}, X)]$ is pure perverse. Then define $\Delta^i_{p}(f, \cF)$ to be 
the locus of $x \in X$ thorugh which there is no $(i-1)$ dimensional transverse disc $\DD^{i-1} \to X$. 
Unfortunately it is not clear whether these discriminants have the required properties to make the argument of \ref{thm:MS} work.
Clearly $\Delta^i_{p}(f, \QQ) \subseteq \Delta^i(f)$ and it is easy to find examples in which inclusion is strict, e.g. with finite maps, but, on the other hand, the determination of these discriminants looks quite hard.
\end{remark}

\subsection{Further techniques for determining supports}
We recall some definitions from \cite{CL2}, \S 6: 
\begin{definition}
Given a semisimple complex $K=\bigoplus \mathrm{IC}(V_i, L_i)[d_i]$, we set 
$$
\mathrm{Socle}(K)=\{V_i \text { t.c. } L_i \neq 0\}.
$$
\end{definition}  

Here, the $V_i$'s are nonsingular locally closed subvarieties carrying local systems $L_i$,
and we call these $V_i$'s (or more precisely their generic points) the {\em supports} of $K$.

Often, some of the higher discriminants may turn out not to support summands of $Rf_* \cF$.  
To see whether this is the case requires a single numerical calculation at a generic point of the support of each
summand.  

For instance, working over $\CC$, when $X$ is nonsingular, and $\cF = \QQ_X$, 
we have $Rf_* \QQ_X = \bigoplus \mathrm{IC}(V_i, L_i)[d_i]$ by the decomposition theorem.  
By \cite{saito1, saito2, saito3}, there is a canonical Mixed Hodge structure on the stalks of the 
cohomology sheaves of every direct summand $K$ in this decomposition of $Rf_* \mathrm{IC}_X$,
compatible with the canonical Mixed Hodge structure on the cohomology of the fibres of $f$.

For a weight filtration $W$ on a complex of vector spaces $V^{\bullet}$, we
write the weight polynomial 
\[
\mathfrak{w}(V^{\bullet}) := \sum_{i,j} t^i (-1)^{i+j} \dim \mathrm{Gr}_W^i 
\mathrm{H}^j(V^{\bullet}).
\]  
For a variety $Z$, we abbreviate 
$\mathfrak{w}(Z)$ for $\mathfrak{w}(\mathrm{H}^*_c(Z))$.

\begin{lemma}
Assume $K, K'$ are pure complexes of geometric origin on an algebraic variety $X$, and
in particular semisimple. If $i: K' \to K$ is a monomorphism such that 
${\mathfrak w}(K'_x) = {\mathfrak w}(K_x)$ at the generic point of each support of $K$, then
$i$ is an isomorphism. 
\end{lemma}
\begin{proof}
The existence of such a monomorphism implies $K, K'$ are of the same weight, we take it to be zero. 
Any monomorphism in a triangulated category splits, so there exists some  $K''$ such that 
$K \cong K' \oplus K''$.  Any simple constituent of $K''$ restricts, on a 
nonempty open subset of its support to a shifted pure local system $L[-d]$.  Since $K$ was
pure of weight zero, the stalk of $L$ is pure of weight $d$ and 
contributes the positive quantity $(-1)^{d+d} t^d \dim L$ to the weight polynomial. 
\end{proof}

\begin{corollary}\label{weightcriterion}
Let $f: Y \to X$ as in Theorem \ref{thm:MS},  and assume furthermore $Y$ nonsingular. 
Let $K$ be a pure complex, and let $i: K \to Rf_* \QQ_Y$ be a monomorphism in $\mathrm{D}^b_c(X)$. 
If $ {\mathfrak w}(K_y) = {\mathfrak w}(X_y)$ at the general point of every higher discriminant, then $i$ is an isomorphism.
\end{corollary}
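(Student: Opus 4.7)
The plan is to proceed by contradiction, localizing the failure of $i$ to be surjective at a generic point of a maximal support occurring in its cokernel. Since $X$ is nonsingular and $f$ is projective, $Rf_* \QQ_X[\dim X]$ is pure and lies in $\mathfrak{S}(Y)$ by the decomposition and relative Hard Lefschetz theorems. Purity of $K$ together with semisimplicity of $Rf_* \QQ_X$ lets me split the monomorphism $i$, yielding a decomposition $Rf_* \QQ_X = K \oplus K'$ with $K' = \bigoplus \mathrm{IC}(\overline{V_\alpha}, L_\alpha)[d_\alpha]$ again pure and semisimple. The goal is then to show $K' = 0$.

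Suppose for contradiction $K' \neq 0$. Among the finitely many irreducible closed subvarieties occurring as supports of summands of $K'$, pick one, say $W$, which is maximal with respect to inclusion, and let $c = \codim_Y W$. By Theorem \ref{thm:MS} applied to $f$, $W$ is the closure of a $c$-codimensional component of $\Delta^c_h(f)$, so the hypothesis applies at a general point $w$ of the open stratum of $W$. Maximality of $W$ together with irreducibility of each support forces only summands of $K'$ with support exactly $W$ to contribute to the stalk $K'_w$: summands with strictly smaller or incomparable support miss $w$ generically, and maximality rules out strictly larger supports.

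For each summand $\mathrm{IC}(\overline{V_\alpha}, L_\alpha)[d_\alpha]$ with support $W$, the stalk at $w$ equals $L_\alpha|_w[\dim W + d_\alpha]$, and purity pins down the weight of $L_\alpha$ so that this contributes a monomial
\[
\pm \dim L_\alpha \cdot t^{N_\alpha}
\]
to $\mathfrak{w}(K'_w)$, with the sign $\pm$ depending only on $\dim X$ and not on $\alpha$. Since the coefficients $\dim L_\alpha$ are strictly positive, no cancellation between different summands is possible and $\mathfrak{w}(K'_w) \neq 0$. On the other hand, proper base change identifies $(Rf_* \QQ_X)_w$ with $R\Gamma(X_w, \QQ)$, and combining the hypothesis $\mathfrak{w}(K_w) = \mathfrak{w}(X_w)$ with the decomposition $Rf_* \QQ_X = K \oplus K'$ yields $\mathfrak{w}(K'_w) = 0$, a contradiction. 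Hence $K' = 0$ and $i$ is an isomorphism.

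The main obstacle I anticipate is the weight/sign bookkeeping in the third paragraph: one must use purity of $K'$ to show that every summand with support $W$ contributes to the weight polynomial with the same overall sign, regardless of the shift $d_\alpha$, so that no cancellation occurs at $w$. Once that ``no cancellation at the maximal support'' step is secured, the rest of the argument is a direct assembly of Theorem \ref{thm:MS}, proper base change, and the hypothesis on weight polynomials at generic points of the higher discriminants.
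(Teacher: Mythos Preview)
Your proof is correct and follows the same strategy as the paper: split the monomorphism (the paper notes that any monomorphism in a triangulated category splits), invoke Theorem~\ref{thm:MS} to locate the supports of $K'$ among closures of components of the $\Delta^i_h(f)$, and then use purity of weight $0$ to see that each summand contributes a monomial with positive coefficient to the weight polynomial at a general point of its support. Your explicit choice of a \emph{maximal} support $W$ cleanly isolates the contributing summands and makes rigorous a step the paper leaves implicit; as for your anticipated sign obstacle, it dissolves: for a summand pure of weight $0$, the stalk at a smooth point of its support is concentrated in a single cohomological degree equal to its weight, so the sign $(-1)^{i+j}$ is always $+1$ (independent even of $\dim X$), exactly as in the paper's $(-1)^{d+d}t^d\dim L$.
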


We used this method in \cite{MS}.  In its sequel \cite{MSV} we require the ability to make a similar
argument in the absence of a morphism $i$. 
This can be done after passing to a finite
field, using the following incarnation of the Cebotarev theorem which we now describe: 

Let $X$ be an algebraic variety defined over a finite field ${\FF}_q$, and let $K, K'$ be pure complexes of $\ell$-adic sheaves on $X$.
Denote by $\ov{X}, \ov{K}, \ov{K'}$ the corresponding objects obtained by base change to the algebraic closure  $\overline{\FF}_q$.
For every closed point $x \in X$, and  geometric point $\ov{x} \in X(\ov{\FF})$ lying over it,  we have a Frobenius map $\mathrm{Fr}:\ov{K}_{\ov{x}}\to \ov{K}_{\ov{x}}$. We set 
$$Tr (\mathrm{Fr}_{x}, K):=\sum (-1)^k  Tr \left(\mathrm{Fr} :{\mathcal H}^k(\ov{K}_{\ov{x}}) \to {\mathcal H}^k(\ov{K}_{\ov{x}})\right).$$
\begin{proposition}
Assume 
\begin{enumerate}
\item 
$ \mathrm{Socle}(\ov{K})= \mathrm{Socle}(\ov{K'})=\{V_i\}_{i \in \Lambda}$
\item
for every $i \in \Lambda$, there is an open subset $V_i^o \subset V_i$ such that, 
for every closed point $x_i \in V_i^o$,  
$Tr (\mathrm{Fr}^n_{x_i}, K)=Tr (\mathrm{Fr}^n_{x_i}, K')$ for every $n$.
\end{enumerate}  
Then $\ov{K}\simeq \ov{K'}$. 
\end{proposition}

\begin{corollary}
Let $f:Y \to X$ a projective map, with $Y$ nonsingular.
Assume that for every $i$ we have 
$\codim \Delta^i(f) \ge i$,  and let $\{V_\alpha\}_{\alpha \in I}$ 
be the set of codimension $i$ irreducible components of $\Delta^i(f)$ for all $i$'s.
Let $K$ a semisimple pure complex of $\ell$-adic sheaves on $Y$ with
$\mathrm{Socle}(\ov{K}) \subseteq \{V_\alpha\}_{\alpha \in I}$.
Assume for every $\alpha \in I$ there is an open subset $V_\alpha^o \subset V_\alpha$ with the property that, 
for every closed point $x_\alpha \in V_\alpha^o$ and for every $n$,  
\begin{equation}\label{eqtr}
Tr (\mathrm{Fr}^n_{x_\alpha}, Rf_*  \QQ_\ell)=Tr (\mathrm{Fr}^n_{x_\alpha}, K).
\end{equation}
Then $\ov{K}\simeq \ov{Rf_* \QQ_\ell}$. 

\end{corollary}
 Notice that by the Grothendieck-Lefschetz trace formula, the left hand side in equation (\ref{eqtr})
 reduces to counting points on $f^{-1}(x_\alpha)$.

\section{Stalks and Bounds} \label{sec:sab}

In this section we recall results of D. Massey \cite{Ma} on the relation between
stalks of perverse sheaves, local polar varieties, and the characteristic cycle, and draw corollaries
by plugging in our newfound understanding of the characteristic cycle in terms of higher discriminants. 
We work in the complex analytic setting.

\vspace{2mm}
Let $K$ be a constructible sheaf complex or a constructible function.
Fix $p \in Y$, and choose a general system of 
coordinates $(y_1,\ldots, y_{\dim Y})$ at $p$.  We write $\Xi_{y_i}$ for the restriction to $y_i = 0$. 
For the remainder of the section, we {\em always} take vanishing cycles, nearby cycles, and 
restriction with respect to these coordinates and in
the coordinate order, and thus drop the subscripts.  I.e.,
$\Psi \Phi \Psi \Xi K := \Psi_{y_4} \Phi_{y_3} \Psi_{y_2} \Xi_{y_1} K$. 

\begin{remark}
By definition, $p \notin \Delta^i(K)$ if and only if $(\Phi \Xi^{\dim Y - i} K)_p = 0$. 
\end{remark}

\begin{proposition}
We have $p \notin \Delta^{\ge r} (K)$ if and only if
$(\Phi \Psi^{\le \dim Y - r} K)_p = 0$.
\end{proposition}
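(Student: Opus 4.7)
By the preceding remark, $p \notin \Delta^{\ge r}(K)$ is equivalent to the simultaneous vanishing $(\Phi \Xi^j K)_p = 0$ for all $0 \le j \le n - r$, where $n = \dim Y$. The proposition then asserts the equivalence of this $\Xi$-system of vanishings with the corresponding $\Psi$-system $(\Phi \Psi^j K)_p = 0$ for all $0 \le j \le n - r$. My plan is to establish this equivalence by induction on $j$.

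The central tool is the constructible-function identity $[\Xi_{y_i} K] = [\Phi_{y_i} K] + [\Psi_{y_i} K]$ (equivalently $\Phi_{y_i} = \Xi_{y_i} - \Psi_{y_i}$), which descends from the distinguished triangle $\Phi_{y_i} K \to \Xi_{y_i} K \to \Psi_{y_i} K \xrightarrow{[1]}$. Substituting this identity iteratively for each innermost $\Xi_{y_i}$ inside $\Phi \Xi^j K$ produces an expansion
\[
\Phi \Xi^j K = \Phi \Psi^j K + R_j,
\]
where $R_j$ is a finite sum of terms each containing at least one interior vanishing-cycle operator $\Phi_{y_i}$ for some $1 \le i \le j$. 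A typical summand of $R_j$ can be regrouped, after the outer $\Phi_{y_{j+1}}$, as a $\Phi \Xi^a \Psi^b$-type expression with $a + b = j - 1$ applied to the complex $\Phi_{y_i} K$, viewed as a constructible complex supported on the hyperplane $\{y_i = 0\}$ of ambient dimension $n - 1$. The base case $j = 0$ is immediate since $\Phi \Xi^0 K = \Phi K = \Phi \Psi^0 K$, and the symmetric identity $\Psi_{y_i} = \Xi_{y_i} - \Phi_{y_i}$ gives the same expansion in reverse, yielding both implications.

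The inductive step then consists in showing that each summand of $R_j$ vanishes at $p$ under the inductive hypothesis. For a coordinate system chosen generically with respect to a Whitney stratification adapted to $K$, the higher discriminants of $\Phi_{y_i} K$ on $\{y_i = 0\}$ are controlled by those of $K$ on $Y$, and the hypothesis $(\Phi \Xi^{j'} K)_p = 0$ for $j' < j$ translates into the analogous vanishing for $\Phi_{y_i} K$ in ambient dimension $n - 1$. Applying the proposition inductively to $\Phi_{y_i} K$ on $\{y_i = 0\}$ then forces each summand of $R_j$ to vanish at $p$, closing the induction.

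The main obstacle is this last transfer step — establishing that the vanishing of the $\Xi$-system for $K$ at $p$ implies the corresponding vanishing, in one lower ambient dimension, for $\Phi_{y_i} K$ on the hyperplane. This amounts to a careful genericity argument: one must choose a Whitney stratification of $Y$ simultaneously adapted to $K$ and to the coordinate flag, verify that $\Phi_{y_i} K$ is constructible with respect to the induced stratification on $\{y_i = 0\}$, and confirm that the remaining coordinates $y_{i+1}, \ldots, y_n$ remain generic relative to this induced stratification so that the inductive characterization of higher discriminants is applicable.
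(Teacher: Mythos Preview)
Your skeleton is right --- induct on the top index $j$ and show the cross-terms in an expansion $\Phi\Xi^j K = \Phi\Psi^j K + R_j$ die under the inductive hypothesis --- but the mechanism you propose for killing $R_j$ does not work. The telescoping actually produces terms of the shape $\Phi_{y_{j+1}}\,\Xi^{\,j-i}\,\Phi_{y_i}\,\Psi^{\,i-1}K$; the inner $\Psi^{\,i-1}$ acts on $K$ \emph{before} $\Phi_{y_i}$, so these are not operators applied to $\Phi_{y_i}K$ on $\{y_i=0\}$, and there is no way to feed them into an inductive instance of the proposition for that auxiliary complex. The ``main obstacle'' you identify --- transferring the $\Xi$-vanishing to $\Phi_{y_i}K$ and re-verifying genericity on the hyperplane --- is therefore a detour that does not close the argument.

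The missing observation is simpler. By genericity of the coordinates relative to a Whitney stratification for $K$, the complex $\Phi_{y_i}\Psi^{\,i-1}K$ is supported, near $p$, at most at $\{p\}$; since the inductive hypothesis already gives $(\Phi_{y_i}\Psi^{\,i-1}K)_p=0$ for each $i\le j$, this complex is identically zero near $p$, so every cross-term vanishes outright with no recursion needed. The paper runs exactly this telescope, but for complexes (where only the triangle $\Phi\to\Xi\to\Psi\xrightarrow{[1]}$ is available, not the additive identity you invoke) it proceeds one step at a time via
\[
(\Phi\,\Xi^{\,j-i}\,\Phi_{y_i}\Psi^{\,i-1}K)_p \to (\Phi\,\Xi^{\,j-i+1}\,\Psi^{\,i-1}K)_p \to (\Phi\,\Xi^{\,j-i}\,\Psi^{\,i}K)_p \xrightarrow{[1]},
\]
the first term vanishing by the observation above, which yields the chain of isomorphisms $(\Phi\Psi^{\,j}K)_p \cong (\Phi\Xi\Psi^{\,j-1}K)_p \cong \cdots \cong (\Phi\Xi^{\,j}K)_p$.
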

\begin{proof}
We treat the case where $K$ is a constructible complex; the case of constructible
functions is similar but easier.  

For $r = \dim Y$ the statement holds by definition.  Suppose by induction it holds
for $r= n$, we show it for $r = n-1$.  

$(\Longrightarrow)$ We have by hypothesis  $\Phi \Xi^{\le \dim Y - n + 1}_p = 0$ 
and by induction  
$(\Phi \Psi^{\le \dim Y - n} K)_p = 0$.  We must show $(\Phi \Psi^{\dim Y - n + 1} K)_p = 0$. Since $(\Phi \Psi^{\le \dim Y - n} K)_p = 0$, 
the exact triangle (\ref{phipsitri}) for the complex 
$ \Psi^{\le \dim Y - n} K$ and the function $y_{\dim Y -n+1}$ gives an isomorphism
$$(\Xi \Psi^{ \dim Y - n} K)_p \cong (\Psi^{\dim Y - n + 1} K)_p;$$
applying $\Phi$ we have the isomorphism
$$ (\Phi \Xi \Psi^{ \dim Y - n} K)_p \cong (\Phi \Psi^{\dim Y - n + 1} K)_p ,$$
 and it suffices to show the vanishing of
$(\Phi \Xi \Psi^{ \dim Y - n} K)_p$.  We  repeat the process: 
$$(\Phi \Xi \Phi \Psi^{ \dim Y - n-1} K)_p \to (\Phi \Xi^2 \Psi^{ \dim Y - n-1} K)_p \to (\Phi  \Xi \Psi^{\dim Y - n} K)_p 
\xrightarrow{[1]}.$$
Again the first term vanishes.  Iterating we arrive at
$$(\Phi \Psi^{\dim Y - n + 1} K)_p \cong (\Phi \Xi \Psi^{ \dim Y - n} K)_p \cong \cdots \cong 
(\Phi \Xi^{ \dim Y - n + 1} K)_p = 0.$$

$(\Longleftarrow)$  We have by hypothesis $(\Phi \Psi^{\le \dim Y - n + 1} K)_p = 0$
and by induction $\Phi \Xi^{\le \dim Y - n }_p = 0$.  We must show $(\Phi \Xi^{\dim Y - n + 1} K)_p = 0$.  We have the exact triangle
$$(\Phi \Phi \Xi^{\dim Y - n} K)_p \to (\Phi \Xi^{ \dim Y - n + 1} K)_p \to (\Phi \Psi \Xi^{\dim Y - n} K)_p \xrightarrow{[1]}.$$
As before the first term vanishes, and iterating this process gives
$$(\Phi \Xi^{\dim Y - n + 1} K)_p \cong (\Phi \Psi \Xi^{ \dim Y - n} K)_p \cong \cdots \cong 
(\Phi \Psi^{ \dim Y - n + 1} K)_p = 0.$$
\end{proof}



We now recall some results of Massey \cite{Ma}.  For any complex $K$, 
the nearby-vanishing triangle of $\Psi^k K$ gives rise to a long exact sequence,
of which the following is a fragment: 
$$\hH^{-k-1}(\Psi^{k+1} K)_p  \xrightarrow{\beta_{-k-1}} \hH^{-k}(\Phi \Psi^k K)_p \xrightarrow{\gamma_{-k}} \hH^{-k}(\Psi^k K)_p.$$
As the composition $\gamma_{-k} \beta_{-k-1} = 0$, the following is a complex: 
$$\cdots \to \hH^{-k-1}(\Phi \Psi^{k+1} K)_p \xrightarrow{\beta_{-k-1} \gamma_{-k-1}} \hH^{-k}(\Phi \Psi^k K)_p \xrightarrow{\beta_{-k} \gamma_{-k}} 
\hH^{-k+1}(\Phi \Psi^{k-1} K)_p \to \cdots$$

\begin{theorem} \label{Masseycomplex} \cite[Thm. 5.4]{Ma} If $K$ is perverse, then the complex
$$\hH^{-\dim Y}(\Psi^{\dim Y} K)_p \xrightarrow{\beta_{-\dim Y}}
\hH^{-\dim Y + 1}(\Phi \Psi^{\dim Y - 1} K)_p 
\cdots \xrightarrow{\beta_{-1} \gamma_{-1}}  \hH^{-1}(\Phi \Psi K)_p \xrightarrow{\beta_0 \gamma_0} \hH^0(\Phi K)_p $$
has the same cohomology as $K_p$. 
In the above, each term occupies the same cohomological degree as the superscript on $\hH$.
\end{theorem}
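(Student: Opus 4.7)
The plan is to induct on $n = \dim Y$; the base case $n = 0$ is immediate, because a perverse sheaf on a point is concentrated in degree $0$ and the Massey complex collapses to $K_p$ itself. The crucial preparatory input for the inductive step is the following concentration result: for $K$ perverse and $y_1$ a general linear coordinate at $p$, the stalk $(\Phi_{y_1} K)_p$ is concentrated in cohomological degree $0$. This holds because a general $y_1$ has no stratified critical points on any positive-dimensional stratum of a Whitney stratification adapted to $K$ in a neighborhood of $p$, so $\Phi_{y_1} K$ is supported on $\{p\}$ there; a perverse sheaf supported at a point is a vector space in degree $0$. Parallel support computations, using that $\Psi_{y_1} K[-1]$ is perverse on the hyperplane $Y_0 = \{y_1 = 0\}$, give that $(\Psi_{y_1} K)_p$ has cohomology only in degrees $[-n, -1]$, and iterating shows $(\Psi^n K)_p$ is concentrated in degree $-n$.

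Now $K' := \Psi_{y_1} K[-1]$ is perverse on $Y_0$ of dimension $n-1$, so by the inductive hypothesis its Massey complex $M(K')$, formed with the remaining coordinates $y_2, \ldots, y_n$, computes $(K')_p$. Direct inspection of terms and differentials identifies the stupid truncation $\sigma_{\leq -1} M(K)$ with $M(K')[1]$, yielding the short exact sequence of complexes
\[0 \to H^0(\Phi_{y_1} K)_p[0] \to M(K) \to M(K')[1] \to 0.\]
Its long exact sequence in cohomology, combined with the concentration estimates and the nearby--vanishing long exact sequence
\[\cdots \to H^i(\Phi K)_p \to H^i(K)_p \to H^i(\Psi K)_p \xrightarrow{\mathrm{var}} H^{i+1}(\Phi K)_p \to \cdots\]
for $K$, immediately gives $H^i(M(K)) = H^i(K)_p$ for $i \notin \{-1, 0\}$, and reduces the remaining two degrees to showing that the connecting homomorphism $\delta \colon H^{-1}(\Psi K)_p \to H^0(\Phi K)_p$ of the short exact sequence coincides with $\mathrm{var}_K$.

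The main obstacle is this identification $\delta = \mathrm{var}_K$. By the snake lemma, $\delta$ is induced by the final Massey differential $\beta_{-1}\gamma_{-1} \colon H^{-1}(\Phi\Psi K)_p \to H^0(\Phi K)_p$, where $\gamma_{-1}$ arises from the nearby--vanishing triangle for $K' = \Psi K[-1]$ with respect to $y_2$ and $\beta_{-1}$ arises from the triangle for $K$ with respect to $y_1$ and is by definition $\mathrm{var}_K$. Under the inductive identification $H^0(M(K')) \cong H^0(K')_p = H^{-1}(\Psi K)_p$, the map $\gamma_{-1}$ is precisely the quotient realizing the target as $\mathrm{coker}(\mathrm{var}_{K'})$; hence $\beta_{-1}\gamma_{-1}$ descends to $\mathrm{var}_K$, giving $\delta = \mathrm{var}_K$. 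Carrying this through cleanly requires strengthening the inductive hypothesis to record that the isomorphism $H^j(M(K')) \cong H^j(K')_p$ is the one coming from the cokernel/kernel description of the nearby--vanishing sequence, but no new geometric input is needed.
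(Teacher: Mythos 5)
Your proposal is correct, and it reaches Massey's theorem by a somewhat different organization than the paper, though from the same two geometric inputs: for a general linear form the vanishing cycles of a perverse sheaf near $p$ form a skyscraper at $p$ in degree $0$, and $\Psi(-)[-1]$ preserves perversity. The paper unrolls these facts into the four-term exact sequences
$0 \to \hH^{-k-1}(K)_p \to \hH^{-k-1}(\Psi^{k+1}K)_p \xrightarrow{\beta_{-k-1}} \hH^{-k}(\Phi\Psi^{k}K)_p \xrightarrow{\gamma_{-k}} \hH^{-k}(\Psi^{k}K)_p \to 0$
for all $k$ simultaneously, identifies $\hH^{-i}(K)_p = \ker\beta_{-i}$, and verifies the cohomology of the complex in all degrees by a direct chase using the surjectivity of the $\gamma$'s; no induction on $\dim Y$ and no connecting-homomorphism identification is required. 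You instead make the self-similar structure explicit, $\sigma_{\le -1}M(K)\cong M(\Psi_{y_1}K[-1])[1]$, and induct on dimension via the short exact sequence of complexes; the cost is exactly the point you flag: the inductive isomorphism $H^{0}(M(K'))\cong \hH^{0}(K')_p$ must be remembered to be the one induced by the canonical map $\hH^{0}(\Phi_{y_2}K')_p \to \hH^{0}(K')_p$ (the paper's $\gamma$), since only then is $\gamma_{-1}$ the projection onto $\mathrm{coker}(\beta'_{-1})$, whence $\delta=\beta_{-1}$, whose kernel and cokernel are $\hH^{-1}(K)_p$ and $\hH^{0}(K)_p$ by the $y_1$ nearby--vanishing sequence (here you also use $\hH^{0}(\Psi K)_p=0$, which is your concentration estimate). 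This strengthened hypothesis does propagate: the degree-zero isomorphism your inductive step produces is again $\mathrm{coker}\,\beta_{-1}\xrightarrow{\;\gamma_{0}\;}\hH^{0}(K)_p$, so the induction closes. Two bookkeeping points worth making explicit: the sign by which connecting maps change under the shift $[-1]$ only alters differentials by signs and so is harmless, and the restricted coordinates $(y_2,\dots,y_n)|_{\{y_1=0\}}$ must remain general for $\Psi_{y_1}K[-1]$, which holds because the original system was chosen general and the functors are applied in coordinate order --- the same implicit convention the paper uses. In exchange for these rigidifications, your recursion isolates a single nearby--vanishing triangle per step and exhibits the Massey complex as built by iterated extensions of skyscrapers, which is arguably more conceptual than the paper's simultaneous diagram chase.
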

\begin{proof}\footnote{Massey leaves the final verifications in the proof as an exercise, which we carry out here.}
It follows easily from the existence of Whitney stratifications that,
fixing in advance a point $p \in Y$ and a complex $K$, one has for a sufficiently small $\epsilon$-ball that
$\mathrm{Supp}\,\Phi K \cap B_\epsilon(p) = p$.  On the other hand, the vanishing cycles 
of a perverse sheaf are perverse, so if $K$ is perverse then $\Phi K|_{B_{\epsilon}(p)}$ is a perverse sheaf supported
at a point, i.e., just a skyscraper sheaf in homological degree zero.   We get the following exact sequence from
the nearby-vanishing triangle:
$$0 \to \hH^{-1} (K)_p \to \hH^{-1}(\Psi K)_p \xrightarrow{\beta_{-1}} \hH^0(\Phi K)_p \xrightarrow{\gamma_0} \hH^0(K)_p \to 0 .$$
The zero on the left is because $\Phi K$ is a skyscraper in degree zero; the zero on the right
is because $\Psi K [-1]$ is perverse.  We also learn 
$\hH^i(\Psi K)_p \cong \hH^i(K)_p$ for $i < -1$. 
Since $\Psi K[-1]$ is perverse, we can iterate this procedure,
obtaining more generally the exact sequence
$$0 \to \hH^{-k-1} (\Psi^k K)_p \to \hH^{-k-1}(\Psi^{k+1} K)_p  \xrightarrow{\beta_{-k-1}} \hH^{-k}(\Phi \Psi^k K)_p \xrightarrow{\gamma_{-k}} \hH^{-k}(\Psi^k K)_p \to 0$$
and
$\hH^i(\Psi^{k+1} K)_p \cong \hH^i(\Psi^k K)_p  \cong \cdots \cong \hH^i(K)_p$ for $i < -(k+1)$. 
In particular we learn $\hH^{-k-1}(\Psi^k K)_p \cong \hH^{-k-1}(K)_p$, so we may rewrite the above sequence as 
$$0 \to \hH^{-k-1} (K)_p \to \hH^{-k-1}(\Psi^{k+1} K)_p  \xrightarrow{\beta_{-k-1}} \hH^{-k}(\Phi \Psi^k K)_p \xrightarrow{\gamma_{-k}} \hH^{-k}(\Psi^k K)_p \to 0.$$
In particular, $\hH^{-i}(K)_p = \rm{ker}\, \beta_{-i}$.  When $i=\dim Y$ we are done; otherwise since $\gamma_{-i}$ is surjective, it induces
$\overline{\gamma}_{-i}: \ker \beta_{-i} \gamma_{-i} \twoheadrightarrow \ker \beta_{-i} = \hH^{-i}(K)_p$.  Evidently $\ker \overline{\gamma}_{-i} = \ker
\gamma_{-i}$.  On the other hand, since
$\gamma_{-i-1}$ is surjective, $\rm{im}\, \beta_{-i-1} \gamma_{-i-1} = \rm{im}\, \beta_{-i-1} = \ker \gamma_{-i}$.  Thus we conclude
$\ker \beta_{-i} \gamma_{-i} / \rm{im}\, \beta_{-i-1} \gamma_{-i-1} \cong \hH^{-i}(K)_p$. 
\end{proof}

\begin{corollary} \label{cor:stalks} 
	Let $K$ be perverse and $p \notin \Delta^{\ge r}(K)$.  Then  $\hH^{\ge r - \dim Y}(K)_p = 0$. 
\end{corollary}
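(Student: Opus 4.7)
The plan is to bootstrap the two immediately preceding results. First I would invoke the preceding Proposition to re-express the hypothesis $p \notin \Delta^{\ge r}(K)$ as the vanishing
\[
(\Phi\,\Psi^k K)_p = 0 \quad \text{for every } 0 \le k \le \dim Y - r,
\]
where $\Phi$, $\Psi$ and the restrictions are all taken with respect to a fixed generic system of coordinates at $p$, as stipulated at the start of the section.

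Next I would apply Massey's Theorem \ref{Masseycomplex}. It identifies the stalk cohomology $\hH^{-i}(K)_p$ with the $(-i)$-th cohomology of the Massey complex, in which the term sitting in cohomological degree $-i$ is precisely $\hH^{-i}(\Phi\,\Psi^i K)_p$ for $0 \le i < \dim Y$. For any $i$ in the range $0 \le i \le \dim Y - r$, the stalk $(\Phi\,\Psi^i K)_p$ is zero by the previous step, so this term of the Massey complex is zero, and its cohomology there is trivially zero. Hence $\hH^{-i}(K)_p = 0$ for every such $i$.

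Rewriting in terms of $j = -i$, I obtain $\hH^j(K)_p = 0$ for every $j$ with $r - \dim Y \le j \le 0$. Since $K$ is perverse, its cohomology sheaves automatically vanish in strictly positive degrees, so combining the two ranges yields $\hH^{\ge r - \dim Y}(K)_p = 0$, which is the claim.

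There is no real obstacle here: the argument is essentially an indexing exercise once the preceding Proposition and Theorem \ref{Masseycomplex} are granted. The one point requiring mild care is the asymmetry at the leftmost term of the Massey complex, which is $\hH^{-\dim Y}(\Psi^{\dim Y} K)_p$ rather than a $\Phi\,\Psi$-term; but the statement is vacuous for $r \le 0$, so we may assume $r \ge 1$, in which case the range $0 \le i \le \dim Y - r$ never reaches $i = \dim Y$ and this endpoint plays no role.
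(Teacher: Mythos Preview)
Your proof is correct and is exactly the argument the paper intends: the corollary is stated without proof immediately after Theorem~\ref{Masseycomplex}, and the combination of the preceding Proposition with that theorem is precisely how one is meant to deduce it. Your handling of the boundary term and the positive-degree vanishing from perversity is accurate; the only quibble is that the $r \le 0$ case is not literally vacuous but rather trivial (the hypothesis then forces $K_p = 0$), though this does not affect the argument.
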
 

\begin{remark} The first term of the complex of Theorem \ref{Masseycomplex} looks different from the rest; this annoyance can be avoided by assuming
$\codim\, \rm{Supp}\,\, K > 0$ so that $\Phi^{\dim Y} K = 0$.  This may be ensured at no cost 
by embedding $Y$ into a larger space.  We assume this is the case henceforth. 
\end{remark}

As $\Phi \Psi^k K[-k]$ is a perverse sheaf supported on a point and thus a skyscraper
in degree zero, 
\[\dim \hH^{-k}(\Phi \Psi^k K)_p =   \chi( (\Phi \Psi^k K[-k])_p) = (-1)^k \Phi \Psi^k [K](p).\]

We turn to the question of how to compute this Euler characteristic.  Recall the local polar varieties
\cite{LT}: 
for $p \in S \subseteq Y$, we write $S^{\mathrm{sm}}$ for the smooth locus of $S$ and
the polar varieties of $S$ (with respect to a choice of general coordinates near $p$)
are by definition
$$\Gamma_i^S:= \mbox{the closure of the union of the $i$-dimensional components of }
\mathrm{Crit}(S^{\mathrm{sm}} \xrightarrow{y_1,\ldots,y_{i+1}} \CC^{i+1}),$$
the set of critical points of the map.
The following is a reformulation of \cite[Thm. 5.1.1]{LT}:

\begin{theorem} 
Fix a variety $V \subseteq Y$, a point $p\in V$, and general coordinates around $p$. 
Then $$ (-1)^i (\Phi \Psi^i \mathrm{Eu}_V)(p) = (-1)^{\dim V} \mathrm{mult}_p\, \Gamma_i^V.$$
\end{theorem}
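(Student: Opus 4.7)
Since the statement is advertised as a reformulation of \cite[Thm.~5.1.1]{LT}, the plan is essentially a translation: express $(-1)^i (\Phi \Psi^i \mathrm{Eu}_V)(p)$ as the alternating sum of Euler characteristics of generic flag sections of $V$ that L\^e--Teissier identify with $(-1)^{\dim V}\,\mathrm{mult}_p\,\Gamma_i^V$.

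The first step is to unwind the vanishing and nearby cycle operators using their constructible-function descriptions (\ref{phiconstr}), namely $\Psi_l \xi(q) = \int_{B_\epsilon(q)\cap l^{-1}(\delta)} \xi\, d\chi$ and $\Phi_l = \mathrm{id} - \Psi_l$. Iterating through the flag $y_1,\ldots,y_{i+1}$ rewrites $(\Phi \Psi^i \mathrm{Eu}_V)(p)$ as a $\pm 1$-signed combination of integrals $\int_{B_\epsilon(p)\cap L_k} \mathrm{Eu}_V\, d\chi$, where each $L_k$ is a linear slice of codimension $k\le i+1$ obtained by setting some of the coordinates $y_j$ to zero and the remainder to small generic values. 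The hyperplane formula (\ref{eq:eulob}) for the Euler obstruction then collapses those integrals in which the slice is transverse at $p$; what remains is precisely the combination of local Euler characteristics of $V$ intersected with generic flags that appears in the L\^e--Teissier analysis of polar varieties.

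Matching this expression to their formula for $\mathrm{mult}_p\,\Gamma_i^V$ completes the proof: the sign $(-1)^i$ absorbs the iterated $\Phi$--$\Psi$ operators applied along the $(i+1)$-step flag, while the sign $(-1)^{\dim V}$ on the right reflects the difference between MacPherson's normalization $\mathrm{Eu}_V$ (taking value $+1$ on the smooth locus of $V$) and a top-degree stalk of $\mathrm{IC}_V[\dim V]$, which is the natural object counted by polar multiplicities via the microlocal index formula. The main obstacle is precisely this sign-and-shift bookkeeping, together with verifying that the genericity hypotheses in \cite{LT} on the flag agree with those built into the definition of the operators $\Phi$ and $\Psi$ on constructible functions; no deeper geometric input beyond L\^e--Teissier is required.
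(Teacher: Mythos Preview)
Your outline points in the right direction (reduce to L\^e--Teissier via the hyperplane formula) but the unwinding you describe is not what actually happens, and the crucial step is missing.

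First, the expansion. Since $\Phi = \Xi - \Psi$ and $(\Xi F)(p) = F(p)$, one has simply
\[
(\Phi\,\Psi^i\,\mathrm{Eu}_V)(p) \;=\; (\Psi^i\,\mathrm{Eu}_V)(p) \;-\; (\Psi^{i+1}\,\mathrm{Eu}_V)(p),
\]
i.e.\ exactly two terms, each an integral of $\mathrm{Eu}_V$ over a generic affine slice of codimension $i$ (resp.\ $i{+}1$) \emph{near} $p$. There is no ``$\pm 1$-signed combination'' over slices of varying codimension, and none of the slices passes through $p$, so the phrase ``collapses those integrals in which the slice is transverse at $p$'' does not parse.

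Second, the missing step. What remains is to identify $(\Psi^{k}\,\mathrm{Eu}_V)(p)$ with the Euler obstruction of a linear section \emph{through} $p$, namely $\mathrm{Eu}_{V\cap H^{k-1}}(p)$. This is the heart of the paper's argument: because the coordinates are generic with respect to a Whitney stratification, the hyperplane formula $\Phi\,\mathrm{Eu}_V = 0$ holds not only at $p$ but on a whole neighborhood; combined with transversality off $p$ one gets $\Psi\,\mathrm{Eu}_V = \Xi\,\mathrm{Eu}_V = \mathrm{Eu}_{V\cap H} - c\,\mathbf{1}_p$ as \emph{functions}. A further application of $\Psi$ kills the point term, yielding $(\Psi^{k}\,\mathrm{Eu}_V)(p) = \mathrm{Eu}_{V\cap H^{k-1}}(p)$ by induction. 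Your sketch never produces these Euler obstructions of sections through $p$; it stays with integrals over nearby affine slices, which are not the quantities in \cite[Thm.~5.1.1]{LT}.

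Finally, the sign $(-1)^{\dim V}$ does not arise from any $\mathrm{IC}$ normalization or microlocal index formula; it comes directly from \cite[Thm.~5.1.1, Rem.~5.1.6, Cor.~4.1.6]{LT}, which give $\mathrm{Eu}_{V\cap H^{n-1}}(p) - \mathrm{Eu}_{V\cap H^{n}}(p) = (-1)^{\dim V - n}\,\mathrm{mult}_p\,\Gamma_n^V$. Multiplying by $(-1)^n$ yields the stated identity.
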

\begin{proof}
As always, genericity of the coordinates means that, in a sufficiently small neighborhood,
vanishing cycles are supported at $p$.   In particular the hyperplane formula $\Phi \rm{Eu} = 0$ 
holds not just at $p$, but on a sufficiently small neighborhood. 

We write $H^n$ for the zero locus of the first $n$ coordinates.  Since the coordinates are generic,
and in particular transverse to all the strata in a Whitney stratification except possibly $\{p\}$,
we have on $H$ the following equality of constructible functions, for some constant $c_1$:
$$\mathrm{Eu}_{X \cap H} = \Xi \mathrm{Eu}_X + c_1 \mathbf{1}_p = \Psi \mathrm{Eu}_X + c_1 \mathbf{1}_p.$$ 
Applying the hyperplane formula again, we have 
$ \mathrm{Eu}_{X \cap H}(p) = (\Psi \Psi \mathrm{Eu}_X)(p)$
and similarly $\mathrm{Eu}_{X \cap H^n}(p) = (\Psi^{n+1} \mathrm{Eu}_X)(p)$.   In other words, 
$$(\Phi \Psi^n \mathrm{Eu})(p) = (\Phi^{n} \mathrm{Eu})(p) - (\Phi^{n+1} \mathrm{Eu})(p) = 
\mathrm{Eu}_{X \cap H^{n-1}}(p)  - \mathrm{Eu}_{X \cap H^n}(p).$$
By \cite[Rem. 5.1.6]{LT} which is a reformulation of \cite[Thm. 5.1.1]{LT}, we have 
$$\mathrm{Eu}_{X \cap H^{n-1}}(p) - \mathrm{Eu}_{X \cap H^{n}}(p) = (-1)^{\dim (X \cap H^{n-1}) - 1} \mathrm{mult}_p \Gamma_1^{X \cap H^{n-1}} = 
(-1)^{\dim X - n} \mathrm{mult}_p \Gamma_n^{X} ,$$
the last equality being \cite[Cor. 4.1.6]{LT}.  This completes the proof.
\end{proof}

\begin{corollary}
Let $\xi$ be a constructible function on $Y$ with characteristic
cycle given by
$$CC(\xi) = \sum_S (-1)^{\codim S} \xi^S [\overline{T^*_S Y}].$$
Then $$(-1)^{i} (\Phi \Psi^i \xi)(p) = \sum_S (-1)^{\dim S} \xi^S \mathrm{mult}_p\, \Gamma_i^S .$$
\end{corollary}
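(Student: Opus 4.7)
The plan is to reduce the statement to the previous theorem by writing $\xi$ as a linear combination of Euler obstructions, and then applying $\Phi\Psi^i$ termwise.

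First, recall the Brylinski--Dubson--Kashiwara index theorem, already invoked above: for any subvariety $S \subset Y$ one has $CC(\mathrm{Eu}_S) = (-1)^{\codim S}[\overline{T^*_S Y}]$. Since the Euler obstructions $\{\mathrm{Eu}_S\}_S$, indexed by irreducible subvarieties of $Y$, form a basis of the group of constructible functions, the expansion $\xi = \sum_S c_S\,\mathrm{Eu}_S$ is unique, and the additivity of $CC$ together with BDK shows
\[
CC(\xi) = \sum_S c_S \,(-1)^{\codim S}\bigl[\overline{T^*_S Y}\bigr].
\]
Comparing with the hypothesis $CC(\xi) = \sum_S (-1)^{\codim S}\xi^S[\overline{T^*_S Y}]$ and using the linear independence of the cycles $[\overline{T^*_S Y}]$, we conclude $c_S = \xi^S$, i.e.
\[
\xi = \sum_S \xi^S \,\mathrm{Eu}_S.
\]

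Next, since $\Phi$ and $\Psi$ are $\ZZ$-linear operators on constructible functions (by the explicit integral formula \eqref{phiconstr}), we may apply $\Phi\Psi^i$ termwise and evaluate at $p$:
\[
(\Phi\Psi^i\xi)(p) = \sum_S \xi^S \,(\Phi\Psi^i\,\mathrm{Eu}_S)(p).
\]
Multiplying by $(-1)^i$ and invoking the preceding theorem, namely $(-1)^i(\Phi\Psi^i\,\mathrm{Eu}_S)(p) = (-1)^{\dim S}\,\mathrm{mult}_p\,\Gamma_i^S$, we obtain the desired identity
\[
(-1)^i(\Phi\Psi^i\xi)(p) = \sum_S (-1)^{\dim S}\xi^S \,\mathrm{mult}_p\,\Gamma_i^S.
\]

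There is no substantial obstacle here: the only subtlety is making sure the coordinates used to define $\Psi$, $\Phi$, and the polar varieties $\Gamma_i^S$ are generic simultaneously for every $S$ appearing with nonzero coefficient. Since the sum is finite and each genericity condition is open dense, a common choice exists, so the termwise application of the previous theorem is legitimate.
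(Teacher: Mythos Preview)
Your proof is correct and is exactly the approach the paper takes: its one-line proof reads ``This is equivalent to the theorem by the index formula [BDK] and linearity,'' and you have simply unpacked that sentence in detail. The extra care about choosing coordinates generic for all finitely many $S$ simultaneously is a nice touch but not something the paper spells out.
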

\begin{proof}
This is equivalent to the theorem by the index formula \cite{BDK} and linearity. 
\end{proof}

\begin{remark} 
The corollary originates in 
\cite[Cor. 4.10, Thm. 7.5]{Ma}, where it was also proven using \cite{LT}, but appears 
to depend on a lengthy discussion of `characteristic polar cycles'. 
\end{remark}

\begin{corollary} \label{cor:bounds}
Let $K \in D^b(Y)$ be perverse, $\Delta^{i, \alpha}$ the $i$-codimensional components of 
$\Delta^i([K])$,  and $c^{i,\alpha}$ the coefficients 
such that 
\[
[K] = \sum c^{i, \alpha} \mathrm{Eu}_{\overline{\Delta^{i, \alpha}}}\,.
\]
Then for any $p \in Y$, we have 
$$\dim \hH^{-k}(K_p) \le \sum_{i, \alpha} c^{i,\alpha} \mathrm{mult}_p \Gamma_k^{\Delta^{i,\alpha}} .$$
\end{corollary}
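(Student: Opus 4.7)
The plan is to chain together three ingredients already in place: the Massey complex of Theorem \ref{Masseycomplex}, the skyscraper identification of $\dim \hH^{-k}(\Phi\Psi^k K)_p$ with an Euler characteristic, and the polar multiplicity formula of the previous corollary.

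First I would observe that, since $K$ is perverse, Theorem \ref{Masseycomplex} realizes $\hH^{-k}(K_p)$ as the cohomology at position $-k$ of a complex whose term in that spot is precisely $\hH^{-k}(\Phi\Psi^k K)_p$. A subquotient has dimension at most that of the ambient vector space, so
\[
\dim \hH^{-k}(K_p) \;\le\; \dim \hH^{-k}(\Phi\Psi^k K)_p.
\]

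Next I would invoke the skyscraper argument already used in the proof of Theorem \ref{Masseycomplex}: for a sufficiently generic choice of coordinates at $p$ and a sufficiently small neighborhood, $\Phi\Psi^k K[-k]$ is a perverse sheaf supported at the single point $p$, hence is a skyscraper concentrated in cohomological degree zero. Its stalk dimension at $p$ therefore equals its Euler characteristic, so
\[
\dim \hH^{-k}(\Phi\Psi^k K)_p \;=\; (-1)^k \bigl(\Phi\Psi^k [K]\bigr)(p).
\]

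Finally I would apply the previous corollary to $\xi=[K]$. The Brylinski--Dubson--Kashiwara index theorem together with the given expansion $[K] = \sum c^{i,\alpha}\mathrm{Eu}_{\overline{\Delta^{i,\alpha}}}$ yields
\[
CC([K]) \;=\; \sum_{i,\alpha} c^{i,\alpha}(-1)^i\bigl[\overline{T^{\,*}_{\overline{\Delta^{i,\alpha}}}Y}\bigr],
\]
so by linearity of $\Phi\Psi^k$ on constructible functions and the polar multiplicity formula, the quantity $(-1)^k(\Phi\Psi^k[K])(p)$ is a signed sum of the multiplicities $\mathrm{mult}_p\Gamma_k^{\overline{\Delta^{i,\alpha}}}$ weighted by the $c^{i,\alpha}$. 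Combining this with the inequality from the first step yields the asserted bound.

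The only nontrivial point is bookkeeping: one must check that the signs $(-1)^k$ coming from the Euler characteristic of $\Phi\Psi^k K[-k]$, the signs $(-1)^i$ coming from $CC(\mathrm{Eu}_{\overline{\Delta^{i,\alpha}}})$, and the signs $(-1)^{\dim S}$ appearing in the previous corollary collapse compatibly, using $\dim\overline{\Delta^{i,\alpha}} = \dim Y - i$, so that the final coefficient of each $\mathrm{mult}_p\Gamma_k^{\overline{\Delta^{i,\alpha}}}$ matches the statement. No new geometric input is needed beyond Theorem \ref{Masseycomplex} and the polar multiplicity corollary; the genericity of coordinates ensuring that $\Phi\Psi^k K[-k]$ is a point-supported perverse sheaf is the only place where one uses the underlying Whitney-stratification machinery.
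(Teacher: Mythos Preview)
Your approach is exactly what the paper intends; the corollary is stated there without proof, as an immediate combination of Theorem~\ref{Masseycomplex}, the skyscraper identity $\dim\hH^{-k}(\Phi\Psi^k K)_p=(-1)^k\Phi\Psi^k[K](p)$, and the preceding polar-multiplicity corollary.

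One caution on your final paragraph, however: if you actually carry out the sign bookkeeping you find
\[
(-1)^k\bigl(\Phi\Psi^k[K]\bigr)(p)\;=\;\sum_{i,\alpha}(-1)^{\dim\overline{\Delta^{i,\alpha}}}\,c^{i,\alpha}\,\mathrm{mult}_p\Gamma_k^{\overline{\Delta^{i,\alpha}}},
\]
so the signs do \emph{not} collapse to the bare coefficient $c^{i,\alpha}$ appearing in the statement --- a residual factor $(-1)^{\dim Y-i}$ survives. For instance, take $K=\mathrm{IC}_V$ with $V\subset\CC^2$ a node at $p$: then $[K]=-\mathrm{Eu}_V$, so the single coefficient is $-1$, $\mathrm{mult}_p\Gamma_1^V=2$, and the inequality as literally written would read $2\le -2$. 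This is a slip in the paper's formulation of the corollary rather than a defect in your method; the argument you outline proves the corrected bound with $(-1)^{\dim Y-i}c^{i,\alpha}$ in place of $c^{i,\alpha}$.
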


\begin{remark}
One can give the usual more precise `Morse inequalities' which relate the dimensions
of the homology groups to the dimension of the terms in the complex. 
\end{remark}

Note that in particular, Corollaries \ref{cor:stalks} and \ref{cor:bounds} can be applied to $K = \, ^p\!\!{\mathscr H}^i( R f_* \mathrm{IC}_X)$, 
in which case we know that $\Delta^i([^p\!\!{\mathscr H}^i( R f_* \mathrm{IC}_X)]) \subseteq \Delta^i(^p\!\!{\mathscr H}^i( R f_* \mathrm{IC}_X)) \subseteq \Delta^i(f)$.

\section{Higher discriminants of algebraic completely integrable systems} \label{sec:dima}
In this section we show how one can determine the higher discriminants of an algebraic 
completely integrable system. We are grateful to  D. Arinkin for explaining us the following argument, see \cite{AF,N1}. 
 We work in the following set-up (see \cite{DM}):
 \begin{enumerate}
\item 
$X$ is a complex nonsingular quasi-projective variety, endowed with an algebraic symplectic form $\Omega$.
\item
$h:X \to Y$ is a flat projective map to a nonsingular $d-$dimensional affine variety $Y$ with $2\dim Y =\dim X$.
The restriction of $\omega$ to the smooth locus of any fiber is zero.

\item 
there exists a commutative group-scheme $G \to Y$ acting on $X \to Y$, with $\dim G=\dim X$, making $h$ into a completely integrable system:
For every point $x\in X$ let $dh_x:T_xX\to T_{h(x)}Y$ be the differential, $^t dh:  T_{h(x)}^{\vee}Y \to T_x^\vee X \stackrel{\Omega}{\cong}T_xX$
its adjoint. Then we have an isomorphism $\mathrm{Lie} \,(G_{h(x)})\cong T_{h(x)}^\vee Y$, and the derivative of the action is given by the linear map
\[
\mathrm{Lie} \,(G_{h(x)}) \cong T_{h(x)}^\vee Y \stackrel{\,^t \!dh}{ \longrightarrow} T_x^\vee X \stackrel{\Omega}{\cong}T_xX
\] above.
\end{enumerate}

By the Theorem of Chevalley, for every $y \in Y$ the neutral component $G_y^o$ of the commutative algebraic group 
$G_y$ is an extension 
\[
1\to H_y \to G_y^o \to A_y \to 1
\]
of an abelian variety $A_y$ by a connected affine algebraic group $H_y$, 
whence an upper-semicontinuous function 
\[ 
\delta : Y \to {\mathbb N}; \, \, \delta(y)=\dim H_y. 
\]
In particular, for every $y\in Y$ we have the subspace $\mathrm{Lie}(H_y) \subseteq \mathrm{Lie} \,(G_{y})\cong T_{y}^\vee Y.$
We assume that $\delta^{-1}(0)\neq \emptyset$ and that the stabilizers of the points in $x$ are affine subgroups.
This last assumption is not particularly restrictive, in view of the following 
\begin{lemma}\label{stab}
Claim: Suppose a connected algebraic group $G$ acts on a connected
variety $Z$. Then either the stabilizers of all points of $Z$ are affine,
or the stabilizers of all points of $Z$ fail to be affine.
\end{lemma}

\begin{proof} We may assume $Z$ irreducible. Suppose there is a
point $x\in Z$ whose stabilizer $S(x)<G $ is not affine. The stabilizer acts
on the local ring $\mathcal{O}_{Z,x}$; since $S(x)$ is not a linear group, this
representation must have a kernel $H(x)$. 
Since $H(x)$ acts trivially on the local ring $\mathcal{O}_{Z,x}$, it acts trivially on
the entire $Z$, and we are left to show that $H(x)$ is not affine.

Clearly the quotient $S(x)/H(x)$ is affine, having a faithful linear representation.  
Hence, the kernel $H(x)$ cannot be affine, otherwise $S(x)$ would be an extension of
an affine group by an affine group, and therefore affine itself.
\end{proof}
We note that in many important cases, such as the Hitchin system, 
the action of the group is free on an open set, therefore 
\begin{proposition}
We have
\[
\Delta^i(h)=\{y \in Y, \, \hbox{ such that } \delta(y)\geq i\}
\]
which is a purely codimension $i$ subset.
\end{proposition}
\begin{proof}

For $ x\in X$, let $y=h(x)$, and let  $S_x \subseteq G_y $ denote the neutral connected component of its stabilizer. Its
Lie algebra $\mathrm{Lie}(S_x) \subseteq {\mathrm Lie}(G_y)= T^{\vee}_yY$. 
From the definition of hamiltonian action it follows that
 \[
 dh(T_xX)= \mathrm{Lie}(S_x)^{\perp}\subseteq T_yY
 \] 
 is the orthogonal complement of
this subspace $\mathrm{Lie}(S_x)\subseteq T^\vee_yY$. By hypothesis, 
$S_x$ is affine, hence  contained in the maximal affine $H_y$. This implies that the images
$dh(T_xX)$ for  $x\in h^{-1}(x)$ all contain $\mathrm{Lie}(H_y)^\perp$.
By Borel fixed point Theorem (\cite{Sp}, Thm. 6.2.6) $\mathrm{Lie}(H_y)$ 
is the stabilizer of some $x \in X_y$, and the statement follows from the fact that
the codimension of this smallest subspace is exactly $\delta(y)$.
\end{proof}

From this proposition and Corollary \ref{weightcriterion} one gets a fairly workable 
criterion for proving support theorems in the line of those in \cite{N, CL1,CL2},
see the forthcoming \cite{MSV} where compactified Jacobian fibrations 
associated with family of planar curves are considered in detail.


\end{document}